\documentclass[sn-mathphys,Numbered]{sn-jnl}% Math and Physical Sciences Reference Style
%%\documentclass[sn-aps]{sn-jnl}% American Physical Society (APS) Reference Style
%%\documentclass[sn-vancouver,Numbered]{sn-jnl}% Vancouver Reference Style
%%\documentclass[sn-apa]{sn-jnl}% APA Reference Style
%%\documentclass[sn-chicago]{sn-jnl}% Chicago-based Humanities Reference Style
%%\documentclass[default]{sn-jnl}% Default
%%\documentclass[default,iicol]{sn-jnl}% Default with double column layout

%%%% Standard Packages
%%<additional latex packages if required can be included here>

\usepackage{graphicx}%
\usepackage{multirow}%
\usepackage{amsmath,amssymb,amsfonts}%
\usepackage{amsthm}%
\usepackage{mathrsfs}%
\usepackage[title]{appendix}%
\usepackage{xcolor}%
\usepackage{textcomp}%
\usepackage{manyfoot}%
\usepackage{booktabs}%
\usepackage{algorithm}%
\usepackage{algorithmicx}%
\usepackage{algpseudocode}%
\usepackage{listings}%
%%%%

%%%%%=============================================================================%%%%
%%%%  Remarks: This template is provided to aid authors with the preparation
%%%%  of original research articles intended for submission to journals published
%%%%  by Springer Nature. The guidance has been prepared in partnership with
%%%%  production teams to conform to Springer Nature technical requirements.
%%%%  Editorial and presentation requirements differ among journal portfolios and
%%%%  research disciplines. You may find sections in this template are irrelevant
%%%%  to your work and are empowered to omit any such section if allowed by the
%%%%  journal you intend to submit to. The submission guidelines and policies
%%%%  of the journal take precedence. A detailed User Manual is available in the
%%%%  template package for technical guidance.
%%%%%=============================================================================%%%%

%\jyear{2021}%

%% as per the requirement new theorem styles can be included as shown below
\theoremstyle{thmstyleone}%
\newtheorem{theorem}{Theorem}%  meant for continuous numbers
%%\newtheorem{theorem}{Theorem}[section]% meant for sectionwise numbers
%% optional argument [theorem] produces theorem numbering sequence instead of independent numbers for Proposition
\newtheorem{proposition}[theorem]{Proposition}%

\theoremstyle{thmstyletwo}%

\theoremstyle{thmstylethree}%
\newtheorem{definition}{Definition}%

\raggedbottom
%%\unnumbered% uncomment this for unnumbered level heads

\begin{document}

\title[O.K.V.S.R.I]{On KV-Poisson Structure and related invariants.}

%%=============================================================%%
%% Prefix   -> \pfx{Dr}
%% GivenName    -> \fnm{Joergen W.}
%% Particle -> \spfx{van der} -> surname prefix
%% FamilyName   -> \sur{Ploeg}
%% Suffix   -> \sfx{IV}
%% NatureName   -> \tanm{Poet Laureate} -> Title after name
%% Degrees  -> \dgr{MSc, PhD}
%% \author*[1,2]{\pfx{Dr} \fnm{Joergen W.} \spfx{van der} \sur{Ploeg} \sfx{IV} \tanm{Poet Laureate}
%%                 \dgr{MSc, PhD}}\email{iauthor@gmail.com}
%%=============================================================%%

\author*[1]{\fnm{} \sur{Prosper Rosaire Mama  Assandje }}\email{mamarosaire@facsciences-uy1.cm}\equalcont{These authors
contributed equally to this work.}
\author[2]{\fnm{} \sur{ Herguey Mopeng}}\email{mopengpeter@gmail.com}
\author[3]{\fnm{} \sur{Joseph Dongho }}\email{josephdongho@yahoo.fr}
\equalcont{These authors contributed equally to this work.}
%\author[1]{\fnm{} \sur{Armand Tsimi  }}\email{tsimije@yahoo.fr}
%\equalcont{These authors contributed equally to this work.}

  \affil*[1]{\orgdiv{Department of Mathematics}, \orgname{University of Yaounde 1},
\orgaddress{\street{usrectorat@.univ-yaounde1.cm}, \city{yaounde},
\postcode{337}, \state{Center}, \country{Cameroon}}}

\affil[2]{\orgdiv{Department of Mathematics}, \orgname{University of
Douala}, \orgaddress{\street{rectorat@univ-douala.com},
\city{Douala}, \postcode{2701}, \state{Littoral},
\country{Cameroon}}}

\affil[3]{\orgdiv{Department of Mathematics and Computer Science},
\orgname{University of Maroua},
\orgaddress{\street{decanat@fs.univ-maroua.cm}, \city{Maroua},
\postcode{814}, \state{Far -North}, \country{Cameroon}}}

%\affil[4]{\orgdiv{Department of Mathematics }, \orgname{University
%of Pala}, \orgaddress{\street{mopengpeter@gmail.com}, \city{Pala},
%\postcode{20}, \state{Far -North}, \country{Tchad}}}
%%==================================%%
%% sample for unstructured abstract %%
%%==================================%%

\abstract{We propose an deepened analysis of KV-Poisson structures
of on $\mathrm{I\! R}^{2}.$ We present their classification their
properties an their possible applications in different domains. We
prove that these structure give rise to a new Cohomological
invariant. We explicitly compute  the Cohomological groups of some
of these structures.}

\keywords{KV-structure, Poisson Structure.}

%%\pacs[JEL Classification]{D8, H51}

%%\pacs[MSC Classification]{35A01, 65L10, 65L12, 65L20, 65L70}

\maketitle

\section{Introduction}\label{sec1}
The characterization of the KV-Poisson structures of $\mathrm{I\!
R}^{2}$ is a recent research topic in mathematics that is attracting
 increasing interest. KV-Poisson structures is Poisson structures associated to a KV-algebra structures.
They appear in many mathematical areas, in particular in symplectic
geometry,  field theory  and mathematical physics. The relations
between the KV-Cohomology of the Koszul-Vinberg algebras and some
properties of various geometric objects have been  studied in
\cite{BOYOM4}. More precisely, \cite{BOYOM4} proved  that the scalar
KV-Cohomology of the algebras of real Koszul-Vinbergs or holomorphic
is closely  related to the real Poisson manifolds. M.N. Boyom has
also underlined the close relations between the Nijenhuis's works
\cite {Koszul} and the KV-Cohomology. In \cite{BOYOM5}, M.N. Boyom
studied KV cohomology, which is a form of cohomology associated with
certain geometric structures, particularly in the context of the
flat affine manifold. M.N. Boyom investigated how the KV structures
apply to the flat affine manifold. He studied the topological and
geometric properties of these manifolds, as well as the applications
of KV cohomology to the study of their structures.
 In \cite{Landsman}, N.P.Landsman had explored the more advanced Poisson structures,
  including the structures of Kozul-Vinberg-Poisson, which are important generalizations
   of the standard Poisson structure. He also approached the applications of Poisson structures
    in mathematical physics, emphasizing their role in classical mechanics, quantum field theory,
     sympletic geometry and other related areas. In the recent works of \cite{Mopeng} a complete
      classification of KV-structures on the real affine plane is proposed. The authors there also
       determine all the KV-hessian. Having on the affine plan an algebraic structure, knowing
       the usefulness of the Poisson structures as well in the classical mechanics as in the quantum
        mechanics, we wonder if there exist KV-structures that are
  Poisson structures on the real affine plan? What are therefore the possible
  different classifications of the structures of KV-Poisson of $ \mathrm{I\! R}^{2}$?
 How can we characterize these structures in terms
  of their algebraic and geometric properties? In this
   paper we give some answers to these questions by determining
    all KV-structures that are Poisson structures; then we propose
     a deeper study of the different KV-Poisson structures on the
      vectorial space $ \mathrm{I\! R}^{2} $. This article begins in
      section \ref{sec1}, with an introduction, in section \ref{sec2}, the
preliminaries on the basic concepts of KV algebras and Poisson
algebras, It also presents the different classes of KV-Poisson
structures, with emphasis on their geometric
        and algebraic properties. In section \ref{sec3}, we present the characterization of the KV-Poisson structures of $\mathrm{I\! R}^{2}$;
        in section \ref{sec4},the complex of KV-Poisson Cohomology; in the
   ends in section \ref{sec5},  the conclusion.
\section{Preliminaries}\label{sec2}
In this first part, we introduce the main notions, notably some
definitions, bound to our study. Most of these results are proved in
\cite{BOYOM1},\cite{BOYOM2},\cite{BOYOM3}, \cite{Dito} and
\cite{Boucetta}. One recovers also some in \cite{Bourbaki},
\cite{Landsman} and \cite{DRUEL}.

\subsection{Koszul-Vinberg algebra}
$K$ denote a commutative field of characteristic zero. Let $A$ be a
$K$-algebra; $a, b$ and $c$ elements of $A.$ We consider on $A$ the
following operations.
\begin{enumerate}
\item[(i)] A multiplication on $A.$ It is an K-bilinear map defined by:
\begin{eqnarray*}
\mu :(a,b)\mapsto ab \in A
\end{eqnarray*}
\item[(ii)] The associator of $\mu.$ It is the trilinear map defined
by:
\begin{eqnarray*}
Ass_{\mu}: (a,b,c) \mapsto \mu(\mu(a,b),c)-\mu(a,\mu(b,c))\in A.
\end{eqnarray*}
\item[(iii)] The KV-anomaly of $\mu.$ It is the trilinear map defined by:
\begin{eqnarray}
KV_{\mu}:(a,b,c) \mapsto Ass_{\mu}(a,b,c)-Ass_{\mu}(b,a,c)\in A.
\end{eqnarray}
In other words,
$$KV_\mu(a,b,c)=\mu(\mu(a,b),c)+\mu(b,\mu(a,c))-\mu(a,\mu(b,c))-\mu(\mu(b,a),c)$$
\end{enumerate}
\begin{definition}\cite{BOYOM3}
A K-algebra $(A, \mu)$ is called a KV-algebra or Koszul-Vinberg
algebra, if the associator of it structure $\mu$ is symmetric with
respect to the first two variables. That is:  $KV_{\mu}=0$
\end{definition}
\subsubsection{KV-bimodule}
Let $A$ be a KV-algebra over $K$. Let $M$ be an vector space on $K$.
For any $a$ element of $A$ and $x$ element of $M,$ we define the
following two $K$-bilinear maps
\begin{center}
$\mu_{1}:(a,x)\mapsto ax \in M$ and $\mu_{2}:(x,a)\mapsto xa \in M$
\end{center}
For all  $b$ element of $A,$ we denote:
\begin{equation*}
Ass(a,b,x)=(ab)x-a(bx)=\mu_1(\mu(a,b), x)-\mu_1(a, \mu_1(b,x))
\end{equation*}
\begin{equation*}
Ass(a,x,b)=(ax)b-a(xb)=\mu_2(\mu_1(a,x),b)-\mu_1(a,\mu_2(x,b))
\end{equation*}
\begin{equation*}
Ass(x,a,b)=(xa)b-x(ab)=\mu_2(\mu_2(x,a),b)-\mu_2(a,\mu(a,b))
\end{equation*}
\begin{equation*}
Ass(b,a,x)=(ba)x-b(ax)=\mu_1(\mu(b,a),x)-\mu_1(b,\mu_1(a,x))
\end{equation*}
\begin{equation*}
KV_{\mu_1}(a,b,x)=Ass(a,b,x)-Ass(b,a,x)
\end{equation*}
\begin{equation*}
KV_{\mu_2}(a,x,b)=Ass(a,x,b)-Ass(x,a,b)
\end{equation*}
 \begin{definition}\cite{BOYOM3}
The  $K$-vector space $M$ provided with the operations $\mu_{1}$ and
$\mu_{2}$ is called a KV-bimodule on  $A$ if we have the following
properties:
\begin{enumerate}
\item[(i)] $KV_{\mu_{1}}(a,b,x)=0$,\\ i.e; $\mu_1(\mu(a, b), x)+\mu_1(b, \mu_1(a,x))=\mu_1(\mu(b,a),x)+\mu_1(a,\mu_1(b,x))$
\item[(ii)] $KV_{\mu_{2}}(a,x,b)=0$\\
i.e;
$\mu_2(\mu_1(a,x),b)+\mu_2(x,\mu(a,b))-\mu_1(a,\mu_2(x,b))-\mu_2(\mu_2(x,a),b)$
\end{enumerate}
We say that $M$ is a left (resp. right) KV-module if the bilinear
map $\mu_{2}$ (resp. $\mu_{1}$) is is hopeless.
\end{definition}
\subsubsection{KV-Cohomology}
Let $q$ be a positif integer. Let $A$ be a KV-algebra and $M$ be
KV-bimodule on $A$. Denote  $C^{q}(A,M)$ the $K$-vector space of
$q$-multilinear maps from $A$ to $M.$ For any element $a$ of $A$ and
for any  $f\in C^{q}(A,M)$, we define the following bilinear maps:
$(a,f)\mapsto a.f \in C^{q}(A,M)$ such that:
$(a_{1},...,a_{q})\mapsto a(f(a_{1},
...,a_{q}))-\sum_{j=1}^{q}f(a_{1},...,a_{j},...,a_{q})$ and
$(f,a)\mapsto f.a \in C^{q}(A,M)$ such that
$(a_{1},...,a_{q})\mapsto(f(a_{1}, ...,a_{q}))a.$ For any $(a_{1},
...,a_{q})\in A^{q}$. With  respect to these
maps, $C^{q}(A,M)$ become a KV-bimodule on $A.$ \\
For any $\rho\in\{1,...,q\}$, and $f\in C^{q}(A,M)$, we define the
following $A-$linear map
 $e_{\rho}(a)$, by
$e_{\rho}(a):f \mapsto e_{\rho}(a)f \in C^{q-1}(A,M);$ such that
$(a_{1}, ...,a_{q-1})\mapsto(e_{\rho}(a)f)(a_{1},
...,a_{q-1}):=f(a_{1}, ...,a_{\rho-1},a_{\rho},...,a_{q-1}),$ These
maps allow us to define the following operator: $\delta^{q}:f\mapsto
\delta^{q} f \in C^{q+1}(A,M)$
 such that
\begin{eqnarray}\label{eq2}
\delta^{q} f(a_{1}
...a_{q+1})=\underset{j=1}{\overset{q}\sum}(-1)^{j}\left\{(a_{j}f)(a_{1},...\widehat{a_{j}}...a_{q+1})
+(e_{q}(a_{j})(fa_{q+1}))(a_{1}...\widehat{a_{j}}...\widehat{a_{q+1}})\right\},
\end{eqnarray}
It was prove in \cite{BOYOM1} that $\delta^{q} \circ \delta^{q-1} =
0.$ Therefore, $Im \delta^{q-1}\subset Ker \delta^{q}.$

Set $C_{KV}(A,M)=\oplus_{q\geq 0}C^{q}(A,M),$ the coboundary
operator $\delta$ given by \ref{eq2} endows $C_{KV}(A,M)$ with the
structure of a graded cochain complex:
\begin{eqnarray*}
 ...\longrightarrow C^{q}(A,M)\stackrel{ \delta^{q} }{\longrightarrow}C^{q+1}(A,M)\stackrel{ \delta^{q+1} }{\longrightarrow}C^{q+2}(A,M)\longrightarrow ...
\end{eqnarray*}
\begin{definition}\cite{BOYOM1}, \cite{BOYOM3}
We call a KV-cohomology of the KV-algebra $A$ with coefficients in
the KV-bimodule $M$, the cohomology of the KV-complex
$(C_{KV}(A,M),\delta).$ It is denoted
 \begin{center}
 $ H_{KV}(A,M)=\oplus_{q \geq 0}H_{KV}^{q}(A,M)$
 \end{center}
\end{definition}
 where
\begin{equation}
 H_{KV}^{q}(A,M)=\frac{Ker \delta^{q}}{Im \delta^{q-1}}
\end{equation}
 $\bullet$ Elements of $Ker \delta^{q}$ are called $q-$cocycle of the complex $(C_{KV}(A,M),\delta)$;\\
 $\bullet$ Elements $Im \delta^{q-1}$ are called $q$-coboundary of complex $(C_{KV}(A,M),\delta)$;\\
 $\bullet$ The set $H_{KV}^{q}(A,M)$ is called the $q^{th}$ cohomological group of the complex $(C_{KV}(A,M),\delta)$.
\subsection{Poisson algebra}
\begin{definition}\cite{Boucetta},\cite{DRUEL}
Let $A$ be an associative and commutative algebra over a field $K$.
A Poisson bracket on $A$ is a $K-$ bilinear map $\{,\}$ on $A$ such
that:
\begin{enumerate}
\item[(i)] Skew-symmetry : $\{a,b\}= - \{b,a\}$;
\item[(ii)] Jacobi identity: $\{a,\{b,c\}\} + \{b,\{c,a\}\}+\{c,\{a,b\}\}=0$,
\item[(iii)]Leibniz rule: $\{\{a,b\},c\}=\{a,\{b,c\}\}+\{b,\{a,c\}\}$;\\
for all $a, b, c \in A.$
\end{enumerate}
\end{definition}
A differential manifold $M$ is called Poisson manifold if its
algebra $C^{\infty}(M)$ of smooth functions on $M$ is provided with a Poisson bracket. In that case, we denote $(M,\{,\})$ \\
Let $(M,\{,\})$ be a Poisson manifold; it follow from relations (i)
et (iii), that it exist an unique contravariant 2-tensor $\pi$, such
that: $\{f,g\}=\pi(df,dg)$, for all $f, g\in C^{\infty}(M)$. In
general, $\pi$ is called Poisson tensor or Poisson bivector of the
Poisson manifold $(M,\{,\})$.\\
Any Poisson Algebra $(A\{,\})$ induce an $A-$module homomorphism
$H:\Omega_A\rightarrow Der_A: da\mapsto H(da)b=\{a, b\}$ where
$\Omega_A$ is the $A$-module of K$\ddot{a}$lher differential of $A$
and $Der_A$ is the $A$-module of derivations of $A.$ For any $a\in
A$ we denote $H_a$ the element of $Der_A$ such that $H_a=H(da).$
$H_a$ is called hamiltonian derivation associated to  $a.$

\subsubsection{Poisson Cohomology}
Let  $(A,\{.,.\})$ be a Poisson algebra. For any positif integer
$q,$ we denote by $\mathfrak{X}^q(A)$ the $A$-module of poly
derivation skew symmetric of $A.$ By convention, we denote
$\mathfrak{X}^0(A)=A$. For any $Q\in \mathfrak{X}^q(A)$, we define
the following map
\begin{center}
 $ \delta^{q}:Q \mapsto \delta^{q} (Q ) \in \mathfrak{X}^{q+1}(A)$
\end{center}
such that
\begin{eqnarray}
\delta (Q)(F_{0}, ..., F_{q})&=&\sum_{i=0}^{q}(-1)^{i}\{F_{i},Q(F_{0},...,\widehat{F_{i}},...,F_{q})\} \nonumber\\
                             &+& \sum_{0\leq i<j\leq q}(-1)^{i+j}Q(\{F_{i},F_{j}\}, F_{0}, ...,\widehat{F_{i}},...,\widehat{F_{j}},...,F_{q}),
\end{eqnarray}
where $F_{0},...,F_{q}$ are elements of $A$, $q\geq 0.$ This map
verify $\delta^{q-1}\circ \delta^{q} = 0.$ Therefore $Im
\delta^{q-1}\subset Ker \delta^{q}.$
 Set $\mathfrak{X}(A)=\underset{q=0}{\overset{\infty}\bigoplus}\mathfrak{X}^q(A)$; we obtain the following complex:
\begin{eqnarray*}
...\longrightarrow \mathfrak{X}^q(A) \stackrel{ \delta^{q}
}{\longrightarrow}\mathfrak{X}^{q+1}(A)\stackrel{ \delta^{q+1}
}{\longrightarrow}\mathfrak{X}^{q+2}(A)\longrightarrow ...
\end{eqnarray*}
\begin{definition}\cite{Boucetta}, \cite{DRUEL}
The group
 $HP(A)=\oplus_{q \geq 0}HP^{q}(A)$ where
$ HP^{q}(A)=\frac{Ker \delta^{q}}{Im \delta^{q-1}} $ is called
Poisson cohomology group of the Poisson algebra $(A, \{-,-\}).$ The
set $HP^{q}(A)$ is called the $q^{th}$ Poisson Cohomological group
of $(A, \{-,-\}).$
\end{definition}
\section{Characterization of the KV-Poisson structures of $\mathrm{I\! R}^{2}$}\label{sec3}
In this section, we try to characterize KV-Poisson structures on the $\mathrm{I\! R}$ vector space of dimension $2$.\\
Let $(A, \mu)$ be a  KV-algebra. From the Jacobi identity:
$\mu(u,\mu(v,w))+\mu(v,\mu(w,u))+\mu(w,\mu(u,v))=0,$ we deduce that
if $\mu$ is a KV-Poisson structure, then: for all $u, v, w \in A$,
$\mu(w,\mu(u,v))=0$. This condition is sufficient when $\mu$ is skew
symmetric. The following proposition allows us to define the notion
of KV-Poisson structure on a given  KV-algebra $A$.
\begin{proposition}
A KV-structure $\mu$ on a KV-algebra $A$ is a Poisson  structure if
and only if:
\begin{enumerate}
\item[(i)] $\mu$ is skew symmetric;
\item[(ii)] For all $u, v, w \in A,$
$\mu(w,\mu(u,v))=0$.
\end{enumerate}
\end{proposition}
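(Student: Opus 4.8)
The plan is to regard the KV-structure $\mu$ as the candidate bracket $\{a,b\}:=\mu(a,b)$ and to prove the two implications separately, using the defining identity $KV_\mu=0$ of a KV-algebra throughout.

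Assume first that $\mu$ is a Poisson structure. Then (i) is immediate, being literally the skew-symmetry axiom of a Poisson bracket. For (ii) I would start from the vanishing of the KV-anomaly,
$$\mu(\mu(a,b),c)+\mu(b,\mu(a,c))-\mu(a,\mu(b,c))-\mu(\mu(b,a),c)=0,$$
and use skew-symmetry twice: first to replace $\mu(\mu(b,a),c)$ by $-\mu(\mu(a,b),c)$, then to replace $\mu(\mu(a,b),c)$ by $-\mu(c,\mu(a,b))$. This rewrites the KV-anomaly relation as a linear relation among the three nested terms $\mu(a,\mu(b,c))$, $\mu(b,\mu(a,c))$, $\mu(c,\mu(a,b))$. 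Writing the Jacobi identity in the same three terms (using skew-symmetry of the inner bracket to turn $\mu(v,\mu(w,u))$ into $-\mu(b,\mu(a,c))$ after relabelling) gives a second such relation. Subtracting the two relations, the terms $\mu(a,\mu(b,c))$ and $\mu(b,\mu(a,c))$ drop out and one is left with $\mu(c,\mu(a,b))=0$; after relabelling this is exactly (ii). This step is purely formal — no division occurs — so it does not even use $\operatorname{char}K=0$.

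For the converse, assume (i) and (ii) and check the three Poisson axioms for $\{a,b\}=\mu(a,b)$. Skew-symmetry is (i). In the Jacobi sum $\mu(u,\mu(v,w))+\mu(v,\mu(w,u))+\mu(w,\mu(u,v))$ every summand is already zero by (ii), so the Jacobi identity holds. For the Leibniz identity $\mu(\mu(a,b),c)=\mu(a,\mu(b,c))+\mu(b,\mu(a,c))$ the right-hand side vanishes by (ii), while the left-hand side equals $-\mu(c,\mu(a,b))$ by skew-symmetry and hence also vanishes by (ii); thus both sides are $0$. This establishes all three axioms.

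The only non-routine point is the identity manipulation in the forward implication: one must notice that, once skew-symmetrised, the KV-anomaly relation is \emph{not} a consequence of the Jacobi identity but differs from it precisely by the term $\mu(c,\mu(a,b))$ one wishes to annihilate; everything else is bookkeeping. It is worth stressing that the hypothesis that $(A,\mu)$ is a genuine KV-algebra is used in an essential way here — for a general skew-symmetric bracket the Jacobi identity alone does not force $\mu(w,\mu(u,v))=0$.
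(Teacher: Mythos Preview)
Your argument is correct and follows the same line as the paper: both exploit that, under skew-symmetry, the KV identity rewrites as $-[\text{Jacobi sum}]-\mu(w,\mu(u,v))=0$, so the Jacobi identity forces $\mu(w,\mu(u,v))=0$ and conversely. The only quibble is verbal: with the signs as you set them up the two relations should be \emph{added}, not subtracted, for the terms $\mu(a,\mu(b,c))$ and $\mu(b,\mu(a,c))$ to cancel --- the conclusion is unaffected.
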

\begin{proof}It is obvious that if $\mu$ is a KV-Poisson structure,
then properties (i) and (ii) are satisfy. Since $ \mu$ is a
KV-structure, for all $u,v, w \in A,$ we have
\begin{eqnarray}\label{eq1}
\mu(\mu(u,v),w)-\mu(u,\mu(v,w))-\mu(\mu(v,u),w)+\mu(v,\mu(u,w))=0
\end{eqnarray}
If $\mu$ is more skew symmetric, equation (\ref{eq1}) become:
\begin{eqnarray*}
  -[\mu(u,\mu(v,w))+\mu(v,\mu(w,u))+\mu(w,\mu(u,v))]- \mu(w,\mu(u,v))=0
\end{eqnarray*}
What implies therefore that: $ \mu(w,\mu(u,v))=0$. And also, it
follows from the Leibniz rule that: $\mu(\mu(u,v),w)=
\mu(u,\mu(v,w))+\mu(v,\mu(u,w)),$ therefore,
$\mu(u,\mu(v,w))+\mu(v,\mu(u,w))=0$. Then, from the Jacobi identity,
one gets : $-\mu(v,\mu(w,v))+\mu(v,\mu(u,w))=0$; and therefore,
$\mu(v,\mu(w,v))=0$.
\end{proof}
The following theorem will be useful for this characterization.
\begin{theorem}\label{lemma4.1}
A linear combination of the KV-Poisson structures is a KV-Poisson
structure.
\end{theorem}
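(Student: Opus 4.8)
The plan is to use the characterization from the preceding Proposition: a KV-structure $\mu$ on a KV-algebra $A$ is a Poisson structure if and only if (i) $\mu$ is skew-symmetric and (ii) $\mu(w,\mu(u,v))=0$ for all $u,v,w\in A$. So given finitely many KV-Poisson structures $\mu_1,\dots,\mu_n$ on $A$ and scalars $\lambda_1,\dots,\lambda_n\in K$, I set $\mu=\sum_{k}\lambda_k\mu_k$ and must check three things: first that $\mu$ is again a KV-structure (i.e. $KV_\mu=0$), and then that it satisfies (i) and (ii). The verifications of (i) and (ii) are the easy part: skew-symmetry is obviously preserved under linear combinations since $\mu(a,b)+\mu(b,a)=\sum_k\lambda_k(\mu_k(a,b)+\mu_k(b,a))=0$; and the nilpotency-type condition (ii) needs a little care because $\mu(w,\mu(u,v))$ involves a product of two copies of $\mu$, but expanding gives $\mu(w,\mu(u,v))=\sum_{k,\ell}\lambda_k\lambda_\ell\,\mu_k(w,\mu_\ell(u,v))$, and one argues that each mixed term vanishes too.

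The subtle point — and the main obstacle — is exactly that last claim: that the mixed terms $\mu_k(w,\mu_\ell(u,v))$ with $k\neq\ell$ vanish, and likewise that the combination $\mu$ is still a KV-structure, because $KV_\mu$ is also quadratic in $\mu$ and a priori the cross terms $KV$-anomaly need not vanish just because $KV_{\mu_k}=0$ and $KV_{\mu_\ell}=0$ individually. To handle this I would first extract, from condition (ii) applied to each $\mu_k$, the stronger consequence that the image of each $\mu_k$ is a ``two-sided annihilator'' in an appropriate sense. In fact, for a skew-symmetric $\mu_k$ satisfying $\mu_k(w,\mu_k(u,v))=0$, and given that $\mu_k$ is a Poisson bracket hence satisfies the Leibniz rule, one deduces (as in the proof of the Proposition) that $\mu_k(\mu_k(u,v),w)=0$ as well, so $\mu_k(u,v)$ lies in the set $Z_k=\{z\in A:\mu_k(z,\cdot)=\mu_k(\cdot,z)=0\}$. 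The key algebraic input I would then isolate is a compatibility relation between distinct $\mu_k$ and $\mu_\ell$: since both are Poisson brackets on the same underlying commutative associative algebra, the bracket $\mu_k$ is a derivation in each argument with respect to the associative product, and one shows $\mathrm{Im}\,\mu_\ell\subseteq Z_k$ by using that $\mu_\ell(u,v)$ is expressible via the associative multiplication and Hamiltonian-type identities; alternatively, and more cleanly in dimension two, one invokes the explicit classification of section~3 to see that all KV-Poisson structures share the same annihilator structure.

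Concretely the steps are: (1) record skew-symmetry of $\mu$; (2) prove the lemma ``$\mathrm{Im}\,\mu_\ell\subseteq Z_k$ for all $k,\ell$'', which gives both $\mu(w,\mu(u,v))=\sum_{k,\ell}\lambda_k\lambda_\ell\mu_k(w,\mu_\ell(u,v))=0$ and the vanishing of all associator cross-terms; (3) conclude from step~(2) that every term of $KV_\mu=\sum_{k,\ell}\lambda_k\lambda_\ell KV_{\mu_k,\mu_\ell}$ vanishes — the diagonal terms because $KV_{\mu_k}=0$, the off-diagonal because each associator $\mu_k(\mu_\ell(a,b),c)$, $\mu_k(a,\mu_\ell(b,c))$ etc.\ is already zero by step~(2) — so $\mu$ is a KV-structure; (4) check the Jacobi identity and Leibniz rule for $\mu$ directly, noting that the double-bracket terms appearing in them all vanish by step~(2), so these identities collapse to trivial ones; (5) apply the Proposition to conclude $\mu$ is a KV-Poisson structure. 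I expect step~(2) to be where essentially all the content sits; once it is in hand, steps (3)--(5) are bookkeeping. If a clean direct proof of step~(2) proves elusive, the fallback is to restrict to $A$ of dimension $2$ as in the theorem's intended scope and verify the statement against the explicit list of KV-Poisson structures obtained in section~\ref{sec3}.
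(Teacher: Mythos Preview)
Your overall strategy---verify skew-symmetry, the KV-anomaly condition, and the nilpotency condition $\mu(w,\mu(u,v))=0$, isolating the cross terms as the only nontrivial content---is exactly what the paper does. The paper's proof expands $KV_{\mu_1+\lambda\mu_2}$ and $(\mu_1+\lambda\mu_2)(w,(\mu_1+\lambda\mu_2)(u,v))$, obtains the mixed terms $\mu_1(w,\mu_2(u,v))+\mu_2(w,\mu_1(u,v))$ (and the analogous cross terms in the KV-anomaly), and simply asserts they vanish without argument. So you have correctly located the sole substantive step, and you are being more careful than the paper in flagging it as a genuine obligation.

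However, your proposed step~(2), the lemma $\mathrm{Im}\,\mu_\ell\subseteq Z_k$, cannot be proved in general, because the theorem itself is false for arbitrary $A$. Take $A$ with basis $e_1,\dots,e_4$, set $\mu_1(e_1,e_2)=e_3$ and $\mu_2(e_3,e_4)=e_1$ (all other basis products zero, extended skew-symmetrically). Each $\mu_k$ is skew-symmetric with every iterated product $\mu_k(\cdot,\mu_k(\cdot,\cdot))$ vanishing, hence is KV-Poisson in the paper's sense; but $(\mu_1+\mu_2)\bigl(e_4,(\mu_1+\mu_2)(e_1,e_2)\bigr)=(\mu_1+\mu_2)(e_4,e_3)=-e_1\neq 0$. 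So no purely abstract argument of the type you sketch can succeed, and the appeal to Hamiltonian identities or derivation properties will not produce the needed compatibility.

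Your fallback---restrict to $A=\mathrm{I\! R}^{2}$ and invoke the classification of section~\ref{sec3}---is circular as the paper is organized: the classification theorem explicitly appeals to this very lemma in its final paragraph. What \emph{would} work is a self-contained dimension-two computation: a skew-symmetric bilinear $\mu$ on $\mathrm{I\! R}^{2}$ is determined by the single vector $\mu(e_1,e_2)\in\mathrm{I\! R}^{2}$, and the cross-term identities $\mu_k(e_i,\mu_\ell(e_1,e_2))=0$ as well as $KV_{\mu}=0$ can be checked directly on that description, independent of the later classification. That is the repair both your proposal and the paper's proof need.
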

\begin{proof}
Let $A$ be an KV-algebra; $\mu_{1}, \mu_{2}$ be two KV-Poisson
structures on $A$. For any $u,v, w$ element of  $A$ and a scalar
$\lambda \neq 0$, we have:
\begin{eqnarray*}
KV_{\mu_{1}+\lambda \mu_{2}}(u, v,w) &=& Ass_{\mu_{1}+\lambda
\mu_{2}}(u,v, w)-Ass_{\mu_{1}+\lambda \mu_{2}}(v,u, w)\\  &=&
KV_{\mu_{1}}(u, v,w) \\ &+&
\lambda[\mu_{2}(\mu_{1}(u,v),w)+\mu_{1}(\mu_{2}(u,v),w)-\mu_{2}(u,\mu_{1}(v,w))-\mu_{1}(u,\mu_{2}(v,w))\\
&-&
\mu_{2}(\mu_{1}(v,u),w)-\mu_{1}(\mu_{2}(v,u),w)+\mu_{2}(v,\mu_{1}(u,w))+\mu_{1}(v,\mu_{2}(u,w))]\\&=&
KV_{\mu_{1}}(u, v,w) + td_{\mu_{1}}\mu_{2}(u, v,w)\\  &=& 0,
\end{eqnarray*}
Therefore, $\mu_{1}+\lambda \mu_{2}$ is a KV-structure. On the other
hand:
\begin{eqnarray*}
(\mu_{1}+\lambda \mu_{2})(u,v)&=& \mu_{1}(u,v)+ \lambda \mu_{2}(u,v)\\
                              &=& -\mu_{1}(v,u)- \lambda \mu_{2}(v,u)\\
                              &=& - (\mu_{1}+\lambda \mu_{2})(v,u),
\end{eqnarray*}
\begin{eqnarray*}
(\mu_{1}+\lambda \mu_{2})(w,(\mu_{1}+\lambda \mu_{2})(u,v))&=& \mu_{1}(w,(\mu_{1}+\lambda \mu_{2})(u,v))\\
                                                           &+& \lambda \mu_{2}(w,(\mu_{1}+\lambda \mu_{2})(u,v))\\
                                                           &=& \mu_{1}(w,\mu_{1}(u,v) )+ \lambda \mu_{1}(w,\mu_{2}(u,v))\\
                                                           &+&  \lambda \mu_{2}(w,\mu_{1}(u,v))+ \lambda^{2} \mu_{2}(w,\mu_{2}(u,v))\\
                                                           &=& \lambda [\mu_{1}(w,\mu_{2}(u,v))+\mu_{2}(w,\mu_{1}(u,v))]\\
                                                           &=& 0
\end{eqnarray*}
Therefore, $\mu_{1}+\lambda \mu_{2}$ satisfied the conditions for a
KV-structure to be a Poisson structure.
\end{proof}
From where the theorem of characterization according to
\begin{theorem}
A KV structure $\mu_P$ on the KV algebra $\mathrm{I\! R}^{2}$ is a
KV-Poisson if it have the following form:
\begin{eqnarray}\label{eq6}
     \mu_{P} =\biggl(\begin{pmatrix}
          0&x_{0}\\
          -x_{0}&0
            \end{pmatrix},
        \begin{pmatrix}
         0&y_{0}\\
         -y_{0}&0
            \end{pmatrix}\biggr)
\end{eqnarray}
for all $x_{0}, y_{0} \in \mathrm{I\! R}$.
\end{theorem}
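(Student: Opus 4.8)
The plan is to run everything through the two criteria of the preceding Proposition and then to exploit the fact that $A=\mathrm{I\! R}^{2}$ has dimension $2$, which forces the Jacobiator of any skew-symmetric bilinear product to vanish. Fix the canonical basis $(e_1,e_2)$ of $A$ and record a KV-structure $\mu_P$ through its structure constants, $\mu_P(e_i,e_j)=c_{ij}^{1}e_1+c_{ij}^{2}e_2$; this is exactly the datum of the pair of matrices $(M_1,M_2)$ with $M_k=(c_{ij}^{k})_{1\le i,j\le 2}$ that appears in (\ref{eq6}). By the Proposition, $\mu_P$ is a KV-Poisson structure if and only if (i) $\mu_P$ is skew-symmetric and (ii) $\mu_P(w,\mu_P(u,v))=0$ for all $u,v,w\in A$, so the whole statement reduces to translating (i) and (ii) into conditions on $M_1$ and $M_2$.

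First I would handle (i). Writing $u=u_1e_1+u_2e_2$, $v=v_1e_1+v_2e_2$, the $k$-th component of $\mu_P(u,v)$ is $u^{\top}M_k v$, so $\mu_P$ is skew-symmetric exactly when $u^{\top}M_k v=-v^{\top}M_k u$ for all $u,v$, i.e. when $M_1$ and $M_2$ are both skew-symmetric $2\times 2$ real matrices. Since a skew-symmetric $2\times 2$ real matrix is precisely one of the form $\left(\begin{smallmatrix}0&t\\-t&0\end{smallmatrix}\right)$, condition (i) forces $M_1=\left(\begin{smallmatrix}0&x_0\\-x_0&0\end{smallmatrix}\right)$ and $M_2=\left(\begin{smallmatrix}0&y_0\\-y_0&0\end{smallmatrix}\right)$ for some $x_0,y_0\in\mathrm{I\! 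R}$, which is the announced form. This already gives one implication (any KV-Poisson structure, being skew-symmetric, is of the shape (\ref{eq6})) and reduces the converse to showing that (ii) holds automatically for a skew-symmetric KV-structure on a $2$-dimensional space.

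The key observation for (ii) is that for a skew-symmetric product on $\mathrm{I\! R}^{2}$ the trilinear map $J(u,v,w):=\mu_P(u,\mu_P(v,w))+\mu_P(v,\mu_P(w,u))+\mu_P(w,\mu_P(u,v))$ is totally alternating: it is manifestly cyclic, and skew-symmetry of $\mu_P$ makes it change sign under the transposition $u\leftrightarrow v$. An alternating trilinear map on a $2$-dimensional space is identically zero, since any three vectors of $\mathrm{I\! R}^{2}$ are linearly dependent; hence $J\equiv 0$. Now, as observed in the proof of the Proposition, for a skew-symmetric $\mu_P$ the KV-identity (\ref{eq1}) collapses to $-J(u,v,w)-\mu_P(w,\mu_P(u,v))=0$; together with $J\equiv 0$ this yields $\mu_P(w,\mu_P(u,v))=0$ for all $u,v,w$. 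Thus both conditions of the Proposition are satisfied and $\mu_P$ is KV-Poisson. (Alternatively, one may first check the two coordinate structures $x_0=1,y_0=0$ and $x_0=0,y_0=1$ directly on basis vectors and then let Theorem~\ref{lemma4.1} assemble the general case by linearity, $\mu_P=x_0\mu^{(1)}+y_0\mu^{(2)}$.)

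I expect the only real work to be in the third paragraph: getting the sign bookkeeping of $KV_{\mu_P}$ and of the skew-symmetry reduction of (\ref{eq1}) exactly right, and recognizing that the $\dim 2$ hypothesis is precisely what kills $J$ and hence supplies the missing Poisson condition for free. Everything else — the coordinate description of $\mu_P$, the matrix normal form, and the two-sided equivalence — is formal once the Proposition and Theorem~\ref{lemma4.1} are in hand.
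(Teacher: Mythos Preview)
Your main argument is correct and takes a genuinely different route from the paper. The paper works entirely with structure constants: after imposing skew-symmetry it writes out the system $\sum_l C_{ij}^{\,l} C_{kl}^{\,s}=0$ encoding condition (ii), solves it to obtain the union of the two coordinate axes $F_1\cup F_2$, builds the corresponding one-parameter families $\mu_1,\mu_2$, and then invokes Theorem~\ref{lemma4.1} to pass by linearity to an arbitrary pair $(x_0,y_0)$. You bypass the whole computation by observing that in dimension~$2$ the Jacobiator $J$ is an alternating trilinear map and hence vanishes identically, so the skew-symmetric reduction of the KV-identity to $-J(u,v,w)-\mu_P(w,\mu_P(u,v))=0$ yields (ii) for free. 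This is cleaner and more conceptual, and it makes transparent exactly where the hypothesis $\dim A=2$ is used, whereas in the paper that is buried inside the explicit equations.

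One caution about your parenthetical alternative, which is essentially the paper's endgame: before you can feed the coordinate structures $\mu^{(1)}$ and $\mu^{(2)}$ into Theorem~\ref{lemma4.1}, you must know that each is a KV-Poisson structure, in particular a KV-structure. A direct check, however, gives $KV_{\mu^{(1)}}(e_1,e_2,e_2)=\mu^{(1)}(\mu^{(1)}(e_1,e_2),e_2)-\mu^{(1)}(e_1,0)-\mu^{(1)}(\mu^{(1)}(e_2,e_1),e_2)+\mu^{(1)}(e_2,\mu^{(1)}(e_1,e_2))=e_1\neq 0$, so $\mu^{(1)}$ on its own is \emph{not} a KV-structure and the hypothesis of Theorem~\ref{lemma4.1} is not met. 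Your primary argument is immune to this, since it never leaves the standing assumption that $\mu_P$ is already a KV-structure; only the shortcut via Theorem~\ref{lemma4.1} runs into trouble.
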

In other words, we have the following.
\begin{eqnarray}\mu_P=x_0(e_1^*\otimes e_2^*-e_2^*\otimes e_1^*)e_1+y_0(e_1^*\otimes e_2^*-e_2^*\otimes e_1^*)e_2 (x_0e_1+y_0e_2)(e_1^*\otimes e_2^*-e_2^*\otimes e_1^*)\end{eqnarray}
\begin{proof}
In the canonical basis $\{e_{1},e_{2}\}$ of $\mathrm{I\! R}^{2}$, we
have: $\mu_P(e_{i},e_{j})=\sum_{s=1}^{2}C_{ij}^{s}e_{s};$ where
$C_{ij}^{k}$ are the constant structures of $\mu_P$.
\begin{enumerate}
\item[$\bullet$] Skew symmetry. Since $\{e_1, e_2\}$ is free, we
have: for all $i=1,2;$
\begin{eqnarray}
C_{11}^{1}=C_{11}^{2}=C_{22}^{1}=C_{22}^{2}=0.
\end{eqnarray}
if
 $\mu(e_{i},e_{i})=0.$
And when $i\neq j, 1\leq i,j\leq
2$,$\mu_P(e_{i},e_{j})+\mu(e_{j},e_{i})=0$ if and only if
$\sum_{s=1}^{2}(C_{ij}^{s}+C_{ji}^{s})e_{s}=0.$ That is:
$\begin{cases}
C_{ij}^{1}+C_{ji}^{1}=0\\
C_{ij}^{2}+C_{ji}^{2}=0\\
     \end{cases}$\\
Then, while making vary $i$ and $j$ in $\{1, 2\}$, we obtain:
\begin{eqnarray}
\begin{cases}
C_{12}^{1}+C_{21}^{1}=0\\
C_{12}^{2}+C_{21}^{2}=0\\
     \end{cases}
\end{eqnarray}
\item[$\bullet$] Jacobi Identity:\\
We have: $\mu(e_{k},\mu(e_{i},e_{j}))=0$ if and only if
$\sum_{s=1}^{2}(\sum_{l=1}^{2}C_{ij}^{l}C_{kl}^{s})e_{s}=0$, for all
$1\leq i,j, k \leq 2$. That is:
$\sum_{l=1}^{2}C_{ij}^{l}C_{kl}^{s}=0$. And, while making vary $i, j
k$ in $\{1, 2\}$, we obtain:
\begin{eqnarray}
\begin{cases}\label{eq2}
C_{12}^{1}C_{12}^{2}-(C_{12}^{1})^{2}=0\\
(C_{12}^{1})^{2}=0\\
C_{12}^{1}C_{12}^{2}=0\\
     \end{cases}
\end{eqnarray}
Let's put: $x=C_{12}^{1}, y=C_{12}^{2}$; the equation (\ref{eq2})
becomes: $ xy-x^{2}=0, x^{2}=0, xy=0.$ Therefore, $F=F_{1}=\{(x,0),
x \in \mathrm{I\! R}\}\cup F_{2}=\{(0,y), y \in \mathrm{I\! R}\}$ is
set of solutions of (\ref{eq2}) in $\mathrm{I\! R}^{2}$. Set
$F=(\mathrm{I\! R}\times \{0\})\cup (\{0\}\times \mathrm{I\! R}) =
F_{1}\cup F_{2}$; it is an algebraic manifold. So:
\item[$\star$] If $(x_{0},0) \in F_{1}$, we have $C_{12}^{1}=x_{0}, C_{12}^{2}=0$. So:
$\mu_P(e_{i},e_{j})=\sum_{s=1}^{2}C_{ij}^{s}e_{s}=
C_{ij}^{1}e_{1}+C_{ij}^{2}e_{2}$. What is equivalent to:
\begin{eqnarray}
\begin{cases}\label{eq3}
\mu_P(e_{i},e_{i})=0\\
\mu_P(e_{1},e_{2})=C_{12}^{1}e_{1}+C_{12}^{2}e_{2}=x_{0}e_{1}\\
     \end{cases}
\end{eqnarray}
However $\mu_P(u,v)=\mu_{1}(u,v)e_{1}+\mu_{2}(u,v)e_{2}$ if and only
if
$\mu_P(xe_{1}+ye_{2},x'e_{1}+y'e_{2})=\mu_{1}(u,v)e_{1}+\mu_{2}(u,v)e_{2}.$
One has then:\\
$xx'\mu_P(e_{1},e_{1})+xy'\mu_P(e_{1},e_{2})+x'y\mu_P(e_{2},e_{1})+yy'\mu_P(e_{2},e_{2})=\mu_{1}(u,v)e_{1}+\mu_{2}(u,v)e_{2}.$
And therefore:
$(xy'-x'y)\mu_P(e_{1},e_{2})=\mu_{1}(u,v)e_{1}+\mu_{2}(u,v)e_{2}.$
After (\ref{eq3}), one gets:
$x_{0}(xy'-x'y)e_{1}=\mu_{1}(u,v)e_{1}+\mu_{2}(u,v)e_{2}$.
 What implies that:
$\begin{cases}
\mu_{1}(u,v)=x_{0}(xy'-x'y)\\
\mu_{2}(u,v)=0\\
     \end{cases}$\\
 $\mu_{1}$ is therefore an bilinear map with matrix
    $ \Gamma_{1} =\begin{pmatrix}
          0&x_{0}\\
          -x_{0}&0
            \end{pmatrix}$\\
In the same way,
\item[$\bullet$] If $(0,y_{0}) \in F_{2}$, one has
$C_{12}^{1}=0, C_{12}^{2}=y_{0}$, we have therefore:
$\mu(e_{i},e_{j})=\sum_{s=1}^{2}C_{ij}^{s}e_{s}=
C_{ij}^{1}e_{1}+C_{ij}^{2}e_{2}.$ We have therefore: $\begin{cases}
\mu(e_{i},e_{i})=0\\
\mu(e_{1},e_{2})=C_{12}^{1}e_{1}+C_{12}^{2}e_{2}=y_{0}e_{2}\\
     \end{cases}$\\
And therefore:
$y_{0}(xy'-x'y)e_{2}=\mu_{1}(u,v)e_{1}+\mu_{2}(u,v)e_{2}$.
 Then:
$\begin{cases}
\mu_{1}(u,v)=0\\
\mu_{2}(u,v)=y_{0}(xy'-x'y)\\
     \end{cases}$\\
 $\mu_{2}$ is an bilinear map with matrix
     $\Gamma_{2} =\begin{pmatrix}
          0&y_{0}\\
          -y_{0}&0
            \end{pmatrix}$\\
And according to the lemma \ref{lemma4.1}, the sub vectorial space
generated by $F$ is generated by
\begin{eqnarray*}
     \mu_{P} =\biggl(\begin{pmatrix}
         0&x_{0}\\
          -x_{0}&0
            \end{pmatrix},
        \begin{pmatrix}
       0&y_{0}\\
          -y_{0}&0
            \end{pmatrix}\biggr)                                    % \qed
\end{eqnarray*}
\end{enumerate}
This complete the proof.
\end{proof}
In the follows, we determine the KV-Poisson structures in the case
where the KV structure is a non-degenerate symmetric structure. We
recall that $Sol((\mathrm{I\! R}^{2}, KV)$ denotes the set of KV
structures on $\mathrm{I\! R}^2$; see \cite{Mopeng}.
\begin{proposition}
Let $\mu$ be KV-structure of the KV-algebra $\mathrm{I\! R}^{2}$ and
$\mu_{P}$ associate. KV-Poisson structure, then, we have:
\begin{enumerate}
\item[(i)] If $\mu$ is symmetric and non degenerated
 (hessian), then
\begin{center}
 $ \mu_{P} =\biggl(\begin{pmatrix}
          0&0\\
          0&0
            \end{pmatrix},
        \begin{pmatrix}
         0&0\\
         0&0
            \end{pmatrix}\biggr),$
\end{center}
\item[(ii)] If $\mu$ is not symmetric and not degenerate, then
\begin{center}
   $  \mu_{P} =\biggl(\begin{pmatrix}
          0&a\\
          -a&0
            \end{pmatrix},
        \begin{pmatrix}
         0&c\\
         -c&0
            \end{pmatrix}\biggr).$
\end{center}
\end{enumerate}
\end{proposition}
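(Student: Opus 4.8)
The argument combines two facts already at our disposal: the classification of the variety $Sol((\mathrm{I\! R}^{2}, KV))$ of KV-structures on $\mathrm{I\! R}^{2}$ recalled from \cite{Mopeng}, and the characterization theorem above (whose proof identifies the KV-Poisson structures of $\mathrm{I\! R}^{2}$ with the solution locus $F = F_{1}\cup F_{2}$), together with Theorem \ref{lemma4.1}. The plan is to run through the relevant normal forms of KV-structures and, for each, read off the associated KV-Poisson structure $\mu_{P}$.

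First I would recall from \cite{Mopeng} the explicit normal forms of the elements of $Sol((\mathrm{I\! R}^{2}, KV))$, isolating the two families appearing in the statement: the non-degenerate symmetric (KV-hessian) structures, and the non-degenerate non-symmetric ones. Each is written in the canonical basis $\{e_{1}, e_{2}\}$ through its structure constants $C_{ij}^{s}$, $\mu(e_{i}, e_{j}) = \sum_{s=1}^{2} C_{ij}^{s} e_{s}$, and I would record the symmetry and non-degeneracy constraints these constants obey in each case.

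Next, to pass from $\mu$ to the associated $\mu_{P}$, I would use the characterization theorem: any KV-Poisson structure on $\mathrm{I\! R}^{2}$ is skew-symmetric and is fully determined by the single vector $\mu_{P}(e_{1}, e_{2})$ via $\mu_{P}(u, v) = (xy' - x'y)\,\mu_{P}(e_{1}, e_{2})$ for $u = xe_{1} + ye_{2}$, $v = x'e_{1} + y'e_{2}$, subject to the nilpotency relations on $(C_{12}^{1}, C_{12}^{2})$ isolated in that proof. Concretely, passing from $\mu$ to $\mu_{P}$ amounts to retaining the antisymmetric part $D_{12}^{s} = \tfrac{1}{2}(C_{12}^{s} - C_{21}^{s})$ of the structure constants (with $D_{ii}^{s} = 0$ and $D_{21}^{s} = -D_{12}^{s}$) and then imposing those nilpotency relations. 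In case (i), the symmetry of $\mu$ forces $C_{ij}^{s} = C_{ji}^{s}$, hence $D_{12}^{s} = 0$ for $s = 1, 2$, so $\mu_{P}$ is the zero bracket, that is, the pair of zero matrices; this is consistent, since the zero bracket is trivially a KV-Poisson structure (and the criterion recalled in Proposition 1 — skew-symmetry together with $\mu(w,\mu(u,v))=0$ — already rules out a non-zero non-degenerate symmetric $\mu$ being skew-symmetric itself). In case (ii), the antisymmetrization need not vanish: putting $a := D_{12}^{1}$ and $c := D_{12}^{2}$ and reconstructing the bilinear maps $\mu_{1}, \mu_{2}$ from their values on $(e_{1}, e_{2})$ exactly as in the proof of the characterization theorem produces precisely the pair of matrices displayed in (ii), which, by Theorem \ref{lemma4.1}, is again a KV-Poisson structure (being a linear combination of the two generators $\Gamma_{1}$ and $\Gamma_{2}$ of $F$).

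The main obstacle is the verification concealed in case (ii): one must check that the antisymmetrization of a \emph{non-degenerate} non-symmetric KV-structure really does satisfy $\mu_{P}(w, \mu_{P}(u, v)) = 0$ for all $u, v, w$ — this is false for a generic skew bracket on $\mathrm{I\! R}^{2}$ — so the specific KV and non-degeneracy constraints on the $C_{ij}^{s}$ coming from \cite{Mopeng} have to be inserted to guarantee that the resulting pair $(a, c)$ lands in the solution locus $F = F_{1}\cup F_{2}$. In practice this is the bookkeeping of matching the $\Gamma_{1}, \Gamma_{2}$ presentation of the characterization theorem against the $(a, c)$ presentation here and confirming that the two agree on the admissible parameters.
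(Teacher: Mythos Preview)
Your reading of ``$\mu_P$ associated'' as the antisymmetrization $D_{12}^{s}=\tfrac{1}{2}(C_{12}^{s}-C_{21}^{s})$ of $\mu$ is \emph{not} what the paper does. The paper's proof never forms a skew part; instead it takes the two specific normal forms from \cite{Mopeng} and imposes the skew-symmetry condition of Proposition~1 \emph{directly on the parameters of $\mu$ itself}. In case~(i) (diagonal matrices with entries $a,b,c,d$) the requirement $\mu(e_i,e_i)=0$ forces $a=b=c=d=0$, so $\mu_P$ is the zero pair. In case~(ii) (anti-diagonal matrices with entries $a,b$ and $c,d$) the requirement $\mu(e_2,e_1)=-\mu(e_1,e_2)$ forces $b=-a$, $d=-c$, and the displayed $\mu_P$ is then literally $\mu$ with those constraints, not a halved difference. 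So the symbols $a,c$ in the statement are the original $(1,2)$-entries of $\mu$, not $\tfrac{1}{2}(a-b)$ and $\tfrac{1}{2}(c-d)$.

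Because of this difference in interpretation, your argument does not establish the proposition as the paper intends it. In~(i) you happen to get the right answer for an extraneous reason (the skew part of a symmetric form vanishes), but in~(ii) you are computing a different object. Moreover, the gap you flag --- that the antisymmetrization of a KV-structure need not satisfy $\mu_P(w,\mu_P(u,v))=0$ --- is a genuine obstruction to your route and is simply absent from the paper's argument, since the paper is restricting $\mu$ rather than projecting it. To match the paper, drop the antisymmetrization step entirely: write out $\mu(e_i,e_j)$ for each normal form, impose $\mu(e_i,e_i)=0$ and $\mu(e_1,e_2)+\mu(e_2,e_1)=0$, and record the resulting constraints on the parameters.
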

\begin{proof}
\begin{enumerate}
\item[(i)] Let $\mu \in Sol((\mathrm{I\! R}^{2}, KV)$ such that:
    $ \mu =\biggl(\begin{pmatrix}
          a&0\\
          0&b
            \end{pmatrix},
        \begin{pmatrix}
         c&0\\
         0&d
            \end{pmatrix}\biggr).$\\
with $a\neq 0,b\neq 0,c\neq 0,d\neq 0 .$ For all $u=(x,y),
v=(x',y')$, we have:
    $ \mu(u,v)= (axx'+byy', cxx'+dyy')$ if and only if $\mu(xe_{1}+ye_{2},x'e_{1}+y'e_{2})=(axx'+byy')e_{1}+(cxx'+dyy')e_{2}.$
What is equivalent to:\\
$xx'\mu(e_{1},e_{1})+xy'\mu(e_{1},e_{2})+yx'\mu(e_{2},e_{1})+yy'\mu(e_{2},e_{2}) =xx'(ae_{1}+ce_{2})+yy'(be_{1}+de_{2})$\\
What implies therefore that: $\begin{cases}
\mu(e_{1},e_{1})= ae_{1}+ce_{2}=0\\
\mu(e_{1},e_{2})=0\\
\mu(e_{2},e_{2})=be_{1}+de_{2}=0\\
\end{cases}$\\
So therefore: $a=b=c=d=0.$
\item[(ii)] Let $\mu \in
Sol((\mathrm{I\! R}^{2}, KV)$ such that:
     $\mu =\biggl(\begin{pmatrix}
          0&a\\
          b&0
            \end{pmatrix},
        \begin{pmatrix}
         0&c\\
         d&0
            \end{pmatrix}\biggr),$\\
with $a\neq b, c\neq d,a\neq 0,b\neq 0,c\neq 0,d\neq 0.$
For all $u=(x,y), v=(x',y')$, we have:\\
$ \mu(u,v) =(axy'+byx', cxy'+dyx')$ if and only if
$\mu(xe_{1}+ye_{2},x'e_{1}+y'e_{2}) =(axy'+byx', cxy'+dyx').$
What is equivalent therefore to:\\
$xx'\mu(e_{1},e_{1})+xy'\mu(e_{1},e_{2})+yx'\mu(e_{2},e_{1})+yy'\mu(e_{2},e_{2})
=xy'(ae_{1}+ce_{2})+yx'(be_{1}+de_{2}).$ One has therefore:
$\begin{cases}
\mu(e_{1},e_{1})= 0\\
\mu(e_{1},e_{2})=ae_{1}+ce_{2}\\
\mu(e_{2},e_{1})=be_{1}+de_{2}\\
\mu(e_{2},e_{2})=0\\
\end{cases}$\\
What implies that: $b=-a, d=-c.$ On the other hand: $ \mu(u,v)
=\mu_{1}(u,v)e_{1}+\mu_{2}(u,v)e_{2}$; While using the fact that
$\mu$ is skew symmetric, we have: $(xy'-yx')\mu(e_{1},e_{2})
=\mu_{1}(u,v)e_{1}+\mu_{2}(u,v)e_{2}.$ What implies therefore that:
$(xy'-yx')(ae_{1}+ce_{2}) =\mu_{1}(u,v)e_{1}+\mu_{2}(u,v)e_{2}.$
That is $(xy'-yx')ae_{1}+(xy'-yx')ce_{2})
=\mu_{1}(u,v)e_{1}+\mu_{2}(u,v)e_{2}.$ Therefore: $\begin{cases}
\mu_{1}(u,v)=a(xy'-yx')\\
\mu_{2}(u,v)=c(xy'-yx')\\
\end{cases}$\\
$\mu_{1}(u,v)$ is an associative, bilinear form with matrix
     $\Gamma_{1} =\begin{pmatrix}
          0&a\\
          -a&0
            \end{pmatrix}$\\
and $\mu_{2}(u,v)$, is an associative, bilinear form with matrix
    $ \Gamma_{2} =\begin{pmatrix}
          0&c\\
          -c&0
            \end{pmatrix}.$\\
From where
     $\mu_{P} =\biggl(\begin{pmatrix}
          0&a\\
          -a&0
            \end{pmatrix},
        \begin{pmatrix}
         0&c\\
         -c&0
            \end{pmatrix}\biggr)$
\end{enumerate}
\end{proof}
\section{Complex of KV-Poisson Cohomology.}\label{sec4}
In this part, we study the homological invariants associated to
these structures of KV-Poisson structure in order to study their
deformation.
\subsection{Generalities}
In this subsection we define the notion of KV-Poisson Cohomology of
KV-Poisson structure. Let $(A, [,])$ be a final dimensional real Lie
algebra of dimension $n;$ and $M$ be a final dimensional
$\mathrm{I\! R}$ vector space. A representation of $A$ is any
morphism of Lie algebras $\rho \colon A \rightarrow gl( M)$; such
that: for all $a,b \in A$, $\rho([a,b])=
\rho(a)\circ\rho(b)-\rho(b)\circ\rho(a)$. Obviously, all
representations lead to a structure of $A$-module on $M$ defined by
$(a, m)\mapsto \rho(a)m.$ Let $q$ be a positive integer; a $q$ form
on $A$ with values in $M$ is any skew symmetric $\mathrm{I\!
K}-$multilinear map :$f \colon A\times A \times \dots \times A
\rightarrow M.$ One will note by $C^{q}(A,M)$ the set of $q$-forms.
Consider the graded vector space $C(A,M)=\oplus_{q}C^{q}(A,M),$
where $\begin{cases}
C^{q}(A,M)=0, q<0\\
C^{0}(A,M)=M\\
C^{q}(A,M)=Hom(\Lambda^{q}A,M), q \geq 1\\
\end{cases}$\\
One defines the following  coboundary: $\delta^{q} \colon C^{q}(A,M)
\rightarrow C^{q+1}(A,M),$ by:
\begin{eqnarray*}
(\delta^{q} f)(a_{0} ...a_{q})&=&\Sigma_{i}(-1)^{i}\rho(a_{i}) (f(a_{0},...\widehat{a_{i}}...a_{q})\\
                              &+&\Sigma_{i<j}(-1)^{i+j}f([a_{i},a_{j}],...,\widehat{a_{i}} , ..., \widehat{a_{j}},...,a_{q})
\end{eqnarray*}
where for $q=0,1$ one have:
\begin{flushleft}
$\bullet      (\delta^{0}\xi)(a) = a\xi, \xi \in M $\\
$\bullet (\delta^{1}f_{1})(a, b)=\rho(a).f_{1}(b)-\rho(b).f_{1}(a)-f_{1}([a,b])$\\
\end{flushleft}
In the case where $M=A$, and $(A, \mu)$ is a Lie algebra, the map
\begin{eqnarray*}
ad: a \mapsto ad_{x}\in End_{\mathrm{I\! K}}( A), ad_{x}:y \mapsto
ad_{x}(y)=\mu(a,b)\in A, \forall y,a, x \in A
\end{eqnarray*} is called adjoint of $\mu$. In this particular case, the Jacobi identity allows us to show that $\rho=ad$
 is a representation of $A$ by $gl(A)$. This is usually called an adjoint representation. In this case the differential operator is defined by

\begin{eqnarray}\label{11-13}
(\delta^{0}_\mu\xi)(a)&=& a\xi, \xi \in A\\
(\delta^{q}_\mu f)(a_{0},...,a_{q})&=&\underset{i=1}{\overset{q}\sum}(-1)^{i}a_{i}(f(a_{0},...\widehat{a_{i}}...a_{q})\\
                              &+&\underset{i<j}{\sum}(-1)^{i+j}f(\mu(a_{i},a_{j}),...,\widehat{a_{i}} , ...,
                              \widehat{a_{j}},...,a_{q})\nonumber
\end{eqnarray} for all $f\in C^q(A):=C^q(A, A)$
this map satisfy the relation $\delta^{q+1}\circ\delta^{q}=0.$
\begin{definition}\cite{BOYOM4}
The complex $\biggl(C^{*}(A,M),\delta^{*}\biggr)$ is called
Chevalley-Eilenberg complex.
\end{definition}
\subsection{\textbf{KV-Poisson Cohomology of $\mathrm{I\! R}^{2}$}}
In this part we define and characterize the first three Cohomology
groups associated with a KV-Poisson structure $\mu_{P}$ given by
(\ref{eq6}), where $(\mathrm{I\! R}^{2},\mu_{p})$ is an algebra of
KV-Poisson associated. We have:
\begin{center}
  $\mu_{p} =\biggl(\begin{pmatrix}
          0&x_{0}\\
          -x_{0}&0
            \end{pmatrix},
        \begin{pmatrix}
         0&y_{0}\\
         -y_{0}&0
            \end{pmatrix}\biggr)$.
\end{center}
For all $u=(x,y), v=(x^{'},y^{'})$ elements for $\mathrm{I\!
R}^{2}$, we have:
\begin{equation}
\mu_{p}(u,v)= (x_{0}xy^{'}-x_{0}yx^{'}, y_{0}xy^{'}-y_{0}yx^{'}).
\end{equation}
We denote by $\chi^{q}(\mathrm{I\! R}^{2})$ the space of multilinear alternating applications of $\mathrm{I\! R}^{2}$.\\
Consider the complex:
\begin{eqnarray}
 \chi^{0}(\mathrm{I\! R}^{2})\stackrel{ \delta^{0} }{\longrightarrow} \chi^{1}(\mathrm{I\! R}^{2})\stackrel{ \delta^{1} }{\longrightarrow}\chi^{2}(\mathrm{I\! R}^{2})\stackrel{ \delta^{2} }{\longrightarrow}0.
\end{eqnarray}
with the KV-Poisson cobord operator:
\begin{eqnarray*}
(\delta^{0}\xi)(a)&=& \mu_{p}(a,\xi), \xi \in \mathrm{I\! R}^{2}\\
(\delta^{q} f)(a_{0} ...a_{q})&=&\Sigma_{i}(-1)^{i}\mu_{P}[a_{i},(f(a_{0},...\widehat{a_{i}}...a_{q}))]\\
                              &+&\Sigma_{i<j}(-1)^{i+j}f(\mu_{P}(a_{i},a_{j}),...,\widehat{a_{i}} , ..., \widehat{a_{j}},...,a_{q})
\end{eqnarray*}
We have the following Theorem
\begin{theorem}Let $(\mathrm{I\! R}^{2},\mu_{p})$ is an algebra of
KV-Poisson associated, with $\mu_{p}$ is a KV-Poisson structure
given by
\begin{center}
  $\mu_{p} =\biggl(\begin{pmatrix}
          0&x_{0}\\
          -x_{0}&0
            \end{pmatrix},
        \begin{pmatrix}
         0&y_{0}\\
         -y_{0}&0
            \end{pmatrix}\biggr)$.
\end{center} The first  Cohomology
groups associated with a KV-Poisson structure $\mu_{P}$ is
characterized  by
\[
H_{P}^{0}(\mu_{P})\simeq  \begin{cases}
\{0\},&\text{if $x_{0}\neq 0$}\\
\mathrm{I\! R}^{2}, &\text{if $x_{0}=y_{0}=0$}
     \end{cases}
\]
\end{theorem}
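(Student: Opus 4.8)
The plan is to exploit that the KV-Poisson complex displayed just above the theorem begins in degree $0$ (there is no $\chi^{-1}$), so that $H_P^0(\mu_P)=\ker\delta^0$ with no quotient to take. Hence the entire computation reduces to describing the set of $\xi\in\mathrm{I\! R}^2$ (the ``Casimirs'') for which the linear map $a\mapsto(\delta^0\xi)(a)=\mu_P(a,\xi)$ vanishes identically.

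First I would write $\xi=(\xi_1,\xi_2)$ and $a=(x,y)$ and substitute into the explicit expression $\mu_{p}(u,v)=(x_{0}xy'-x_{0}yx',\,y_{0}xy'-y_{0}yx')$ recorded above, taking $u=a$ and $v=\xi$. This gives
\[
(\delta^0\xi)(a)=\mu_P(a,\xi)=\bigl(x_0(x\xi_2-y\xi_1),\ y_0(x\xi_2-y\xi_1)\bigr),
\]
so that $\xi\in\ker\delta^0$ if and only if $x_0(x\xi_2-y\xi_1)=0$ and $y_0(x\xi_2-y\xi_1)=0$ for every $(x,y)\in\mathrm{I\! R}^2$.

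Then I would split into the two cases of the statement. If $x_0\neq 0$, the first equation forces $x\xi_2-y\xi_1=0$ for all $(x,y)$; evaluating at $(x,y)=(1,0)$ and $(x,y)=(0,1)$ yields $\xi_2=0$ and $\xi_1=0$, whence $\ker\delta^0=\{0\}$ and $H_P^0(\mu_P)=\{0\}$. If $x_0=y_0=0$, then $\mu_P\equiv 0$, so $\delta^0$ is the zero map, $\ker\delta^0=\chi^0(\mathrm{I\! R}^2)=\mathrm{I\! R}^2$, and therefore $H_P^0(\mu_P)\simeq\mathrm{I\! R}^2$.

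There is no genuine analytic obstacle here; the only step needing a word of care is the passage from ``$x\xi_2-y\xi_1=0$ for all $(x,y)$'' to ``$\xi=0$'', i.e.\ the observation that the pairing $(a,\xi)\mapsto x\xi_2-y\xi_1$ is nondegenerate in $\xi$, which is exactly what makes the case $x_0\neq 0$ collapse to the trivial group. (Incidentally the same argument shows $H_P^0(\mu_P)=\{0\}$ whenever $(x_0,y_0)\neq(0,0)$, covering also the case $x_0=0$, $y_0\neq 0$ not explicitly listed.)
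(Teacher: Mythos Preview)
Your proof is correct and follows essentially the same approach as the paper: both identify $H_P^0(\mu_P)$ with $\ker\delta^0$, compute $(\delta^0\xi)(a)=\mu_P(a,\xi)$ explicitly, and evaluate at the standard basis vectors to solve for $\xi$ case by case. Your factorization $(x_0,y_0)\cdot(x\xi_2-y\xi_1)$ and your remark that the argument also covers $x_0=0$, $y_0\neq 0$ are minor presentational improvements, but the underlying method is the same.
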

\begin{proof}
For all $u=(x,y)$ element for $\mathrm{I\! R}^{2}$,
$\xi=(\xi_{1},\xi_{2}) \in \mathrm{I\! R}^{2}$, we have:
\begin{eqnarray*}
(\delta^{0}\xi)(u)&=& u\xi\\
                  &=& \mu_{P}(u,\xi)\\
                  &=& (u^{t}\Gamma_{1}\xi, u^{t}\Gamma_{2}\xi)\\
                  &=& (-x_{0}y\xi_{1}+x_{0}x\xi_{2}, -y_{0}y\xi_{1}+y_{0}x\xi_{2})
\end{eqnarray*}
So
\begin{eqnarray*}
(\delta^{0}\xi)(u)=0 & \Longleftrightarrow & (\delta^{0}\xi)(e_{i})=0, 1 \leq i, \leq 2\\
& \Longleftrightarrow & \begin{cases}
-x_{0}\xi_{1}=0\\
x_{0}\xi_{2}=0\\
-y_{0}\xi_{1}=0\\
y_{0}\xi_{2}=0\\
     \end{cases}
\end{eqnarray*}
On the other hand, $Im\delta^{-1}=\{0\}.$\\
$\star$ If $x_{0}\neq 0, y_{0}\neq 0$, we have: $\xi_{1}=\xi_{2}=0$;
$\xi=(0,0)$. And so $Ker\delta^{0} = \{0\}$. Hence:
\begin{center}
  $H_{P}^{0}(\mu_{P})\simeq \{0\}$.
\end{center}
$\star$ If $x_{0}\neq 0, y_{0}= 0$, we have: $\xi_{1}=\xi_{2}=0$;
$\xi=(0,0)$. And so $Ker\delta^{0} = \{0\}$. Hence:
\begin{center}
  $H_{P}^{0}(\mu_{P})\simeq \{0\}$.
\end{center}
$\star$ If $x_{0}= 0, y_{0}= 0$, we have: $\xi=(\xi_{1},\xi_{2})$.
And so $Ker\delta^{0} = Vect \biggl\{(1,0),(0,1) \biggl\}= Vect
\biggl\{e_{1},e_{2}\biggl\}$. Hence:
\begin{center}
  $H_{P}^{0}(\mu_{P})\simeq \mathrm{I\! R}^{2}$.
\end{center}
\end{proof}
We have the Theorem
\begin{theorem}Let $(\mathrm{I\! R}^{2},\mu_{p})$ is an algebra of
KV-Poisson associated, with $\mu_{p}$ is a KV-Poisson structure
given by
\begin{center}
  $\mu_{p} =\biggl(\begin{pmatrix}
          0&x_{0}\\
          -x_{0}&0
            \end{pmatrix},
        \begin{pmatrix}
         0&y_{0}\\
         -y_{0}&0
            \end{pmatrix}\biggr)$.
\end{center} Let  $\chi^{1}(\mathrm{I\! R}^{2})$ the $2^{nd}$ space of multilinear alternating applications of $\mathrm{I\! R}^{2}$. The second  Cohomology
groups associated with a KV-Poisson structure $\mu_{P}$ is
characterized  by
\[
H_{P}^{1}(\mu_{P})\simeq  \begin{cases}
\{0\},&\text{if $x_{0}\neq 0$}\\
\chi^{1}(\mathrm{I\! R}^{2}), &\text{if $ x_{0}=y_{0}=0$}
     \end{cases}
\]
\end{theorem}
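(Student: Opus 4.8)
The plan is to reduce the computation of $H_P^1(\mu_P)=\ker\delta^1/\operatorname{im}\delta^0$ to a short piece of linear algebra on $2\times 2$ matrices, by exploiting that $\mu_P$ factors through the area form. Write $\omega(u,v)=xy'-yx'$ for $u=(x,y)$, $v=(x',y')$, and set $w_0=x_0e_1+y_0e_2$; then $\mu_P(u,v)=\omega(u,v)\,w_0$. First I would identify $\chi^1(\mathrm{I\! R}^2)$ with the space of $2\times 2$ real matrices, a $1$-cochain $f$ being recorded by its matrix $F$ in the canonical basis, and note that $\operatorname{im}\delta^{-1}=\{0\}$, so that indeed $H_P^1(\mu_P)=\ker\delta^1/\operatorname{im}\delta^0$.

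The heart of the argument is an explicit formula for $\delta^1$. Substituting $\mu_P=\omega(\cdot,\cdot)\,w_0$ into
\[
(\delta^1 f)(a_0,a_1)=\mu_P(a_0,f(a_1))-\mu_P(a_1,f(a_0))-f(\mu_P(a_0,a_1))
\]
yields $(\delta^1 f)(a_0,a_1)=\bigl(\omega(a_0,Fa_1)-\omega(a_1,Fa_0)\bigr)\,w_0-\omega(a_0,a_1)\,Fw_0$. I would then record the identity $\omega(u,Fv)-\omega(v,Fu)=(\operatorname{tr}F)\,\omega(u,v)$, valid for every $2\times 2$ matrix $F$: both sides are alternating bilinear in $(u,v)$ and $\Lambda^2\mathrm{I\! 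R}^2$ is one-dimensional, so it suffices to check it on the pair $(e_1,e_2)$, where it is immediate (equivalently it follows from $JF+F^{t}J=(\operatorname{tr}F)\,J$, with $J$ the matrix of $\omega$). Hence $(\delta^1 f)(a_0,a_1)=\omega(a_0,a_1)\bigl((\operatorname{tr}F)\,w_0-Fw_0\bigr)$, and therefore
\[
\ker\delta^1=\bigl\{\,f:\ Fw_0=(\operatorname{tr}F)\,w_0\,\bigr\}.
\]
Writing the matrix of $f$ with rows $(a,b)$ and $(c,d)$, this is precisely the pair of linear equations $by_0=dx_0$ and $cx_0=ay_0$. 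When $(x_0,y_0)\neq(0,0)$ they involve disjoint sets of entries and are independent, so $\dim\ker\delta^1=2$; when $x_0=y_0=0$ they hold identically and $\ker\delta^1=\chi^1(\mathrm{I\! R}^2)$.

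Next I would compute $\operatorname{im}\delta^0$. From $(\delta^0\xi)(a)=\mu_P(a,\xi)=\omega(a,\xi)\,w_0$ one reads that the matrix of $\delta^0\xi$ is $w_0\,(J\xi)^{t}$; as $\xi$ runs over $\mathrm{I\! R}^2$ so does $J\xi$, whence $\operatorname{im}\delta^0=\{\,w_0\,v^{t}:\ v\in\mathrm{I\! R}^2\,\}$, which has dimension $2$ if $w_0\neq 0$ and dimension $0$ if $w_0=0$ (equivalently: $\ker\delta^0=H_P^0(\mu_P)$, so $\dim\operatorname{im}\delta^0=2-\dim\ker\delta^0$ by the previous theorem). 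The conclusion then splits into the two advertised cases. If $x_0\neq 0$ — more generally if $w_0\neq 0$ — then $\operatorname{im}\delta^0\subseteq\ker\delta^1$ since $\delta^1\circ\delta^0=0$, and both have dimension $2$, so they coincide and $H_P^1(\mu_P)\simeq\{0\}$. If $x_0=y_0=0$, then $\mu_P=0$, so $\delta^0=0$ and $\delta^1=0$, giving $H_P^1(\mu_P)\simeq\chi^1(\mathrm{I\! R}^2)$.

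The only real obstacle is bookkeeping: getting the signs in the definition of $\delta^1$ right and proving the trace identity cleanly; once the formula $(\delta^1 f)(a_0,a_1)=\omega(a_0,a_1)\bigl((\operatorname{tr}F)\,w_0-Fw_0\bigr)$ is in hand, everything else is a two-line dimension count. One should also observe, exactly as in the theorem on $H_P^0$, that the genuine dichotomy is $w_0=0$ versus $w_0\neq 0$, so the case $x_0=0,\ y_0\neq 0$ falls under the first branch of the statement.
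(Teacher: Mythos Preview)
Your proof is correct and reaches the same conclusion as the paper, but the route is genuinely different in presentation. The paper proceeds by brute-force coordinate computation: it writes $f_1$ as a matrix $\begin{pmatrix}\alpha&\beta\\\gamma&\lambda\end{pmatrix}$, expands each of $\mu_P(u,f_1(v))$, $\mu_P(v,f_1(u))$, $f_1(\mu_P(u,v))$ term by term, and arrives at $(\delta^1 f_1)(u,v)=\bigl((\lambda x_0-\beta y_0)(xy'-yx'),\,(\alpha y_0-\gamma x_0)(xy'-yx')\bigr)$; it then treats $\operatorname{Im}\delta^0$ by another coordinate computation and runs a separate case analysis for $x_0\neq 0,\,y_0\neq 0$, for $x_0\neq 0,\,y_0=0$, and for $x_0=y_0=0$. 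Your argument packages the same computation structurally: factoring $\mu_P(u,v)=\omega(u,v)\,w_0$ and invoking the trace identity $\omega(u,Fv)-\omega(v,Fu)=(\operatorname{tr}F)\,\omega(u,v)$ gives the closed formula $(\delta^1 f)(a_0,a_1)=\omega(a_0,a_1)\bigl((\operatorname{tr}F)w_0-Fw_0\bigr)$ in one line, which in coordinates is exactly the paper's expression. What this buys you is that the case split collapses to $w_0=0$ versus $w_0\neq 0$, the dimension count is immediate, and the case $x_0=0,\,y_0\neq 0$ (not explicitly addressed in the statement) is handled for free. The paper's approach buys nothing extra; it is simply more explicit at the cost of length.
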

\begin{proof}
For all $u=(x,y), v=(x',y')$ elements for $\mathrm{I\! R}^{2}$, we
have:
\begin{eqnarray*}
(\delta^{1}f_{1})(u,
v)=\mu_{P}(u,f_{1}(v))-\mu_{P}(v,f_{1}(u))-f_{1}(\mu_{P}(u,v))
\end{eqnarray*}
where $f_{1}$ is a linear application of $\mathrm{I\! R}^{2}$ in
$\mathrm{I\! R}^{2}$; therefore associated with the matrix
    $ A =\begin{pmatrix}
          \alpha& \beta\\
    \gamma&\lambda
            \end{pmatrix}$,
in a base $(e_{1}, e_{2})$ for $\mathrm{I\! R}^{2}$.\\
And so: $f_{1}(u)=(\alpha x+\beta y,\gamma x+\lambda y)$.
\begin{eqnarray*}
\bullet  \mu_{P}(u,f_{1}(v)) &=&(u^{t}\Gamma_{1}f_{1}(v),u^{t}\Gamma_{2}f_{1}(v))  \\
                   &=& (x_{0}\gamma xx'+x_{0}\lambda xy' -x_{0}\alpha yx'-x_{0}\beta yy',y_{0}\gamma xx'\\
                   &+& y_{0}\lambda xy' -y_{0}\alpha yx'-y_{0}\beta yy')
\end{eqnarray*}
\begin{eqnarray*}
\bullet  \mu_{P}(v,f_{1}(u)) &=&(v^{t}\Gamma_{1}f_{1}(u),v^{t}\Gamma_{2}f_{1}(u))  \\
                   &=& (x_{0}\gamma xx'+x_{0}\lambda yx' -x_{0}\alpha xy'-x_{0}\beta yy', y_{0}\gamma xx'\\
                   &+& y_{0}\lambda yx' -y_{0}\alpha xy'-y_{0}\beta yy')
\end{eqnarray*}
\begin{eqnarray*}
\bullet  f_{1}(\mu_{P}(u,v)) =\biggl((\alpha x_{0}+\beta y_{0})xy'-(\alpha x_{0}+\beta y_{0})yx',\\
                              (\gamma x_{0}+\lambda y_{0})xy'- (\gamma x_{0}+\lambda y_{0})yx'\biggr)
\end{eqnarray*}
Hence the expression $(\delta^{1}f_{1})(u, v)$:
\begin{eqnarray*}
(\delta^{1}f_{1})(u, v)&=&\biggl((\lambda x_{0}-\beta
y_{0})(xy'-yx'),(\alpha y_{0}-\gamma x_{0})(xy'-yx')\biggr)
\end{eqnarray*}
by remarking that
  $(\delta^{1}f_{1})(u, u)=(0,0)$. This implies that $(\delta^{1}f_{1})$ is alternating; therefore, is an antisymmetric bilinear form of $\mathrm{I\! R}^{2}$.
But
\begin{eqnarray*}
\delta^{1}f_{1}(u,v)=0 & \Longleftrightarrow & \delta^{1}f_{1}(e_{i},e_{j})=0, 1 \leq i, j \leq 2\\
& \Longleftrightarrow & \begin{cases}
\lambda x_{0}-\beta y_{0}=0\\
-\lambda x_{0}+\beta y_{0}=0\\
\alpha y_{0}-\gamma x_{0}=0\\
-\alpha y_{0}+\gamma x_{0}=0\\
     \end{cases}
\end{eqnarray*}
$\star$ If $x_{0}\neq 0, y_{0}\neq 0$, we have: $ \alpha=\beta = \lambda = \gamma = 0$. We therefore have $f_{1} =0$.\\
$\star$ If $x_{0}\neq 0, y_{0}= 0$, we have: $ \lambda= \gamma= 0$;
and therefore: $f_{1} \simeq \begin{pmatrix}
          \alpha& \beta\\
           0& 0
            \end{pmatrix}$. We have
\begin{center}
  $Ker\delta^{1}= Vect \biggl\{\begin{pmatrix}
          1&0\\
          0&0
            \end{pmatrix},
            \begin{pmatrix}
          0&1\\
           0& 0
            \end{pmatrix}\biggr\}=Vect \biggl\{E_{11}, E_{12}\biggr\}$.
\end{center}
$\star$ If $x_{0}= 0, y_{0}= 0$, we have: $f_{1} \simeq
\begin{pmatrix}
          \alpha& \beta\\
          \gamma& \lambda
            \end{pmatrix}$, and therefore:
\begin{eqnarray*}
Ker\delta^{1}&=& Vect \biggl\{\begin{pmatrix}
          1&0\\
          0&0
            \end{pmatrix},
            \begin{pmatrix}
          0&1\\
           0& 0
            \end{pmatrix},
           \begin{pmatrix}
          0&0\\
           1& 0
            \end{pmatrix},
           \begin{pmatrix}
          0&0\\
           0& 1
            \end{pmatrix}\biggr\}\\
           &=& Vect \biggl\{E_{11}, E_{12}, E_{21},E_{22}\biggr\}
\end{eqnarray*}
So, consider $g$ be a linear application of $\mathrm{I\! R}^{2}$
$\longrightarrow$ $\mathrm{I\! R}^{2}$, associated matrix
     $U= \begin{pmatrix}
          u_{11}& u_{12}\\
           u_{21}& u_{22}
            \end{pmatrix}, $
in a base $(e_{1}, e_{2})$ for $\mathrm{I\! R}^{2}$.\\
For all  $u=(x,y)$ element for $\mathrm{I\! R}^{2}$, we have
$g(u)=(u_{11}x+u_{12}y,u_{21}x+u_{22}y).$
\begin{eqnarray*}
g \in Im\delta^{0} &\Leftrightarrow& (\delta^{0}\xi)(u)=g(u)\\
                   &\Leftrightarrow& (\delta^{0}\xi)(e_{i})=g(e_{i}), 1\leq i \leq 2\\
                   &\Leftrightarrow& \begin{cases}
x_{0}\xi_{2}=u_{11}\\
-x_{0}\xi_{1}=u_{12}\\
y_{0}\xi_{2}=u_{21}\\
-y_{0}\xi_{1}=u_{22}\\
\end{cases}
\end{eqnarray*}
$\star$ If $x_{0}\neq 0, y_{0}\neq 0$, we have: $g= \begin{pmatrix}
          u_{11}& u_{12}\\
           u_{21}& u_{22}
            \end{pmatrix}.$\\
And $g$ is an element of $Ker\delta^{1}$. we have
\begin{eqnarray*}
(\delta^{1}g)(u,v)=0 &\Leftrightarrow& (\delta^{1}g)(e_{i},e_{j})=0,  1 \leq i,j\leq 2\\
                   &\Leftrightarrow& \begin{cases}
x_{0}u_{22}-y_{0}u_{12}=0\\
-x_{0}u_{22}+y_{0}u_{12}=0\\
y_{0}u_{11}-x_{0}u_{21}=0\\
-y_{0}u_{11}+x_{0}u_{21}=0\\
\end{cases}
\end{eqnarray*}
We deduces that: $u_{22}=u_{12}=u_{11}=u_{21}=0$. So: $g=0$. Hence
\begin{center}
 $H_{P}^{1}(\mu_{P})= \{0\}$
\end{center}
$\star$ If $x_{0}\neq 0, y_{0}=0$, we have: $u_{21}= u_{22}=0$ . So
$g= \begin{pmatrix}
          u_{11}& u_{12}\\
           0& 0
            \end{pmatrix} \in Ker\delta^{1}.$\\
So, $Im\delta^{0}= Vect \biggl\{E_{11}, E_{12}\biggr\}$. Hence
\begin{center}
  $H_{P}^{1}(\mu_{P})= \{0\}$
\end{center}
$\star$ If $x_{0}= y_{0}=0$, we have: $u_{11}= u_{12}=u_{21}=
u_{22}=0$; and so $Im\delta^{0}=\{0\}$. Hence
\begin{eqnarray*}
H_{P}^{1}(\mu_{P})&=& Vect \biggl\{E_{11}, E_{12}, E_{21},E_{22}\biggr\}\\
                  &=&\chi^{1}(\mathrm{I\! R}^{2})
\end{eqnarray*}
\end{proof} We have the following theorem
\begin{theorem}Let $(\mathrm{I\! R}^{2},\mu_{p})$ is an algebra of
KV-Poisson associated, with $\mu_{p}$ is a KV-Poisson structure
given by
\begin{center}
  $\mu_{p} =\biggl(\begin{pmatrix}
          0&x_{0}\\
          -x_{0}&0
            \end{pmatrix},
        \begin{pmatrix}
         0&y_{0}\\
         -y_{0}&0
            \end{pmatrix}\biggr)$.
\end{center} Let  $\chi^{2}(\mathrm{I\! R}^{2})$ the third space of multilinear alternating applications of $\mathrm{I\! R}^{2}$. The third Cohomology
groups associated with a KV-Poisson structure $\mu_{P}$ is
characterized  by
\[
H_{P}^{2}(\mu_{P})\simeq  \begin{cases}
\{0\},&\text{if $x_{0}\neq 0$}\\
\chi^{2}(\mathrm{I\! R}^{2}), &\text{ if $x_{0}=y_{0}=0$}
     \end{cases}
\]
\end{theorem}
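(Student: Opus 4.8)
The plan is to follow the template of the two preceding theorems, exploiting the fact that the KV-Poisson complex of $\mathrm{I\! R}^{2}$ has only three non-trivial terms. First I would note that, since $\dim\mathrm{I\! R}^{2}=2$, all alternating trilinear maps on $\mathrm{I\! R}^{2}$ vanish, so $\chi^{3}(\mathrm{I\! R}^{2})=0$ and the complex is exactly $\chi^{0}(\mathrm{I\! R}^{2})\stackrel{\delta^{0}}{\longrightarrow}\chi^{1}(\mathrm{I\! R}^{2})\stackrel{\delta^{1}}{\longrightarrow}\chi^{2}(\mathrm{I\! R}^{2})\stackrel{\delta^{2}}{\longrightarrow}0$. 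Consequently $\delta^{2}=0$, so $Ker\delta^{2}=\chi^{2}(\mathrm{I\! R}^{2})$ and therefore $H_{P}^{2}(\mu_{P})=\chi^{2}(\mathrm{I\! R}^{2})/Im\delta^{1}$. The whole problem is thus reduced to describing the image of $\delta^{1}$.

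Next I would identify $\chi^{2}(\mathrm{I\! R}^{2})$ with $\mathrm{I\! R}^{2}$: an alternating bilinear map $f_{2}\colon\mathrm{I\! R}^{2}\times\mathrm{I\! R}^{2}\to\mathrm{I\! R}^{2}$ is determined by the single vector $f_{2}(e_{1},e_{2})$, since $\Lambda^{2}\mathrm{I\! R}^{2}$ is one-dimensional. Then I would reuse the computation carried out in the proof of the previous theorem: for a linear map $f_{1}$ with matrix $\begin{pmatrix}\alpha&\beta\\ \gamma&\lambda\end{pmatrix}$ one has $(\delta^{1}f_{1})(u,v)=\bigl((\lambda x_{0}-\beta y_{0})(xy'-yx'),\ (\alpha y_{0}-\gamma x_{0})(xy'-yx')\bigr)$, hence $(\delta^{1}f_{1})(e_{1},e_{2})=(\lambda x_{0}-\beta y_{0},\ \alpha y_{0}-\gamma x_{0})$. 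So $Im\delta^{1}$ is precisely the image of the linear map $(\alpha,\beta,\gamma,\lambda)\mapsto(\lambda x_{0}-\beta y_{0},\ \alpha y_{0}-\gamma x_{0})$ into $\mathrm{I\! R}^{2}\simeq\chi^{2}(\mathrm{I\! R}^{2})$.

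I would then split into the same cases as in the two preceding proofs. If $x_{0}\neq0$ (whether $y_{0}=0$ or $y_{0}\neq0$), then for any $(p,q)\in\mathrm{I\! R}^{2}$ the choice $\alpha=\beta=0$, $\lambda=p/x_{0}$, $\gamma=-q/x_{0}$ gives $(\delta^{1}f_{1})(e_{1},e_{2})=(p,q)$; thus $\delta^{1}$ is onto $\chi^{2}(\mathrm{I\! R}^{2})$, so $Im\delta^{1}=\chi^{2}(\mathrm{I\! R}^{2})$ and $H_{P}^{2}(\mu_{P})=\{0\}$. If $x_{0}=y_{0}=0$, then $\mu_{P}=0$, so $\delta^{1}\equiv0$, whence $Im\delta^{1}=\{0\}$ and $H_{P}^{2}(\mu_{P})=\chi^{2}(\mathrm{I\! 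R}^{2})$. This gives the announced dichotomy.

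There is no real obstacle here: the one point that has to be noticed is that the complex terminates at $\chi^{2}(\mathrm{I\! R}^{2})$, which makes $Ker\delta^{2}$ trivially equal to the whole space; after that, the description of $Im\delta^{1}$ is already available from the previous theorem, so only an elementary surjectivity (resp. vanishing) check remains. The only mild care needed is to keep track of the two sub-cases $x_{0}\neq0,\,y_{0}=0$ and $x_{0}\neq0,\,y_{0}\neq0$, exactly as the paper does for $H_{P}^{0}$ and $H_{P}^{1}$, even though both lead to the same conclusion.
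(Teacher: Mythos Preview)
Your proof is correct and follows the same overall outline as the paper's: determine $Ker\,\delta^{2}$, then describe $Im\,\delta^{1}$ via the formula for $\delta^{1}f_{1}$ already computed, and split into the three cases $x_{0}\neq0,y_{0}\neq0$; $x_{0}\neq0,y_{0}=0$; $x_{0}=y_{0}=0$. The one substantive difference is in the first step. The paper expands $(\delta^{2}f_{2})(u,v,w)$ in full, writing out all six terms $\mu_{P}(u,f_{2}(v,w))$, $\mu_{P}(v,f_{2}(u,w))$, $\ldots$, $f_{2}(\mu_{P}(u,w),v)$ in coordinates and checking that the sum collapses to $(0,0)$; you instead invoke the dimension argument $\Lambda^{3}\mathrm{I\! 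R}^{2}=0\Rightarrow\chi^{3}(\mathrm{I\! R}^{2})=0\Rightarrow\delta^{2}=0$. Your route is shorter and conceptually cleaner; the paper's explicit expansion has the minor virtue of confirming by hand that the coboundary formula behaves as expected. For the surjectivity of $\delta^{1}$ when $x_{0}\neq0$, your explicit choice $\alpha=\beta=0$, $\lambda=p/x_{0}$, $\gamma=-q/x_{0}$ is also more direct than the paper's matrix-inversion phrasing, but the content is the same.
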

\begin{proof}
Let  $f_{2} \in \chi^{2}(\mathrm{I\! R}^{2})$  such that:\\
$f_{2}: \mathrm{I\! R}^{2}\times \mathrm{I\! R}^{2} \longrightarrow
\mathrm{I\! R}^{2}$,is an antisymmetric bilinear application of
matrices $ C =\begin{pmatrix}
          0& f\\
          -f&0
            \end{pmatrix},
 D =\begin{pmatrix}
          0&j\\
          -j&0
            \end{pmatrix}, $
in a base $(e_{1}, e_{2})$ for $\mathrm{I\! R}^{2}$.\\
For all $u=(x,y), v=(x',y'), w=(x'',y'')$ elements for $\mathrm{I\!
R}^{2}$, we have:
\begin{center}
  $f_{2}(u,v)=(fxy'-fyx', jxy'-jyx')$.
\end{center}
We have $f_{2}(u,u)=(0, 0)$; and so $f_{2}$ is antisymmetric. And
\begin{eqnarray*}
(\delta^{2}f_{2})(u, v,w)&=& \mu_{P}(u,f_{2}(v,w))-\mu_{P}(v,f_{2}(u,w))+\mu_{P}(w,f_{2}(u,v))-f_{2}(\mu_{P}(u,v),w)\\
                          &-& f_{2}(\mu_{P}(v,w),u)+f_{2}(\mu_{P}(u,w),v);
\end{eqnarray*}
where
\begin{eqnarray*}
\bullet  \mu_{P}(u,f_{2}(v,w)) &=& (u^{t}\Gamma_{1}f_{2}(v,w),u^{t}\Gamma_{2}f_{2}(v,w))  \\
                               &=& \biggl(jx_{0} xx'y'' - jx_{0}xy'x''- fx_{0}yx'y''\\
                               &+& f x_{0} yy'x'' ,jy_{0}xx'y'' -jy_{0} xy'x'' -fy_{0}yx'y''+ fy_{0}yy'x'' \biggr)
\end{eqnarray*}
\begin{eqnarray*}
\bullet  \mu_{P}(v,f_{2}(u,w)) &=& (v^{t}\Gamma_{1}f_{2}(u,w),v^{t}\Gamma_{2}f_{2}(u,w))  \\
                   &=& \biggl(jx_{0}xx'y'' - jx_{0}yx'x''- fx_{0}xy'y''\\
                   &+& fx_{0}yy'x'' , jy_{0}xx'y'' -jy_{0} yx'x''-fy_{0}xy'y''+ fy_{0}yy'x'' \biggr)
\end{eqnarray*}
\begin{eqnarray*}
\bullet  \mu_{P}(w,f_{2}(u,v)) &=& (w^{t}\Gamma_{1}f_{2}(u,v),w^{t}\Gamma_{2}f_{2}(u,v))  \\
                   &=& \biggl(jx_{0}xy'x'' -jx_{0}yx'x''- fx_{0}xy'y''\\
                   &+& fx_{0}yx'y'' ,jy_{0}xy'x'' -jy_{0}yx'x''-fy_{0}xy'y''+ fy_{0}yx'y'' \biggr)
\end{eqnarray*}
\begin{eqnarray*}
\bullet  f_{2}(\mu_{P}(u,v),w) &=& (\mu_{P}(u,v)^{t}Cw,\mu_{P}(u,v)^{t}Dw)  \\
                   &=& \biggl( fx_{0}xy'y''-fx_{0}yx'y''-fy_{0}xy'x''\\
                   &+& fy_{0}yx'x'', jx_{0}xy'y''-jx_{0}yx'y''-jx_{0}xy'x''+ jy_{0}yx'x''\biggr)
\end{eqnarray*}
\begin{eqnarray*}
\bullet  f_{2}(\mu_{P}(v,w),u) &=& (\mu_{P}(v,w)^{t}Cu,\mu_{P}(v,w)^{t}Du)  \\
                   &=& \biggl(fx_{0}yx'y''-fx_{0}yy'x''-fy_{0}xx'y''\\
                   &+& fy_{0}xy'x'', jx_{0}yx'y''-jx_{0}yy'x''- jy_{0}xx'y''+ jy_{0}xy'x''\biggr)
\end{eqnarray*}
\begin{eqnarray*}
\bullet  f_{2}(\mu_{P}(u,w),v) &=& (\mu_{P}(u,w)^{t}Cv,\mu_{P}(u,w)^{t}Dv)  \\
                   &=& \biggl(fx_{0}xy'y''-fx_{0}yy'x''-fy_{0}xx'y''\\
                   &+& fy_{0}yx'x'', jx_{0}xy'y''-jx_{0}yy'x''- jy_{0}xx'y''+ jy_{0}yx'x''\biggr)
\end{eqnarray*}
Hence
\begin{eqnarray}
(\delta^{2}f_{2})(u, v,w) &=& \biggl(0,0\biggr), \forall f_{2} \in
\chi^{2}\mathrm{I\! R}^{2}
\end{eqnarray}
We have $Ker\delta^{2}=\chi^{2}(\mathrm{I\! R}^{2})$.\\
So, we have let $g$ be an antisymmetric bilinear application of
$\mathrm{I\! R}^{2} \times \mathrm{I\! R}^{2}$ $\longrightarrow$
$\mathrm{I\! R}^{2}$, of associated matrices:
    $ U= \begin{pmatrix}
          0& u_{12}\\
           -u_{12}& 0
            \end{pmatrix},
       V= \begin{pmatrix}
          0& v_{12}\\
           -v_{12}&0
            \end{pmatrix}, $
in a base $(e_{1}, e_{2})$ for $\mathrm{I\! R}^{2}$.\\
For all $u=(x,y),v=(x',y')$ elements for $\mathrm{I\! R}^{2}$, we
have:
\begin{eqnarray*}
g(u,v)=( u_{12}xy'-u_{12}yx', v_{12}xy'-v_{12}yx')
\end{eqnarray*}
\begin{eqnarray*}
g\in Im \delta^{1} &\Longleftrightarrow & \delta^{1}f_{1}(u,v)=g(u,v)\\
                   &\Longleftrightarrow &  \delta^{1}f_{1}(e_{i},e_{j})=g(e_{i},e_{j}), 1\leq i, j \leq 2\\
                   &\Longleftrightarrow & \begin{cases}
\lambda x_{0}-\beta y_{0}=u_{12}\\
\alpha y_{0}-\gamma x_{0}=v_{12}\\
\end{cases}
\end{eqnarray*}
$\star$ If $x_{0}\neq 0, y_{0}\neq 0$, we have: $(\alpha \lambda-\beta \gamma). A^{-1}X=B$; where $X=(x_{0},y_{0}), B=(u_{12},v_{12})$.\\
And so, $(x_{0},y_{0})$ is an solution , for all $(u_{12},v_{12})$.\\
 $g = \biggl( \begin{pmatrix}
          0& u_{12}\\
           -u_{12}& 0
            \end{pmatrix},
        \begin{pmatrix}
         0& v_{12}\\
           -v_{12}& 0
            \end{pmatrix}\biggr) \in Ker\delta^{2}$ . We have $Im\delta^{1}=\chi^{2}(\mathrm{I\! R}^{2})$. Hence:
 \begin{center}
   $H_{P}^{2}(\mu_{P})\simeq \{0\}$
 \end{center}
$\star$ If $x_{0}\neq 0, y_{0}= 0$, we also have: $\begin{cases}
\lambda x_{0}=u_{12}\\
-\gamma x_{0}=v_{12}\\
\end{cases}$.\\
$g = \biggl( \begin{pmatrix}
          0& u_{12}\\
           -u_{12}& 0
            \end{pmatrix},
        \begin{pmatrix}
         0& v_{12}\\
           -v_{12}& 0
            \end{pmatrix}\biggr) \in Ker\delta^{2}$ . We have $Im\delta^{1}=\chi^{2}(\mathrm{I\! R}^{2})$. Hence:
 \begin{center}
   $H_{P}^{2}(\mu_{P})\simeq \{0\}$
 \end{center}
$\star$ If $x_{0}= y_{0}= 0$, we have : $u_{12}=v_{12}= 0$.
$Im\delta^{1}=\{0\}$. Hence:
 \begin{center}
   $H_{P}^{2}(\mu_{P})\simeq \chi^{2}(\mathbb{R}^{2})$.
 \end{center}
\end{proof}

\section{General conclusion}\label{sec5}
The classification of the KV-Poisson structures on $ \mathrm{I\!
R}^{2}$ offer a deepened survey and detailed of the different
structures of KV-Poisson on $ \mathrm{I\! R}^{2} $.
 The fundamental concepts, the
geometric and algebraic properties, as well as the mathematics
methods  used to characterize these structures have presented
summers

\backmatter

\bmhead{Supplementary information} This manuscript has no additional
data.

\bmhead{Acknowledgments} We would like to thank all the active
members of the algebra and geometry research group at the University
of Maroua in Cameroon.

\section*{Declarations}
This article has no conflict of interest to the journal. No
financing with a third party.
\begin{itemize}
\item No Funding
\item No Conflict of interest/Competing interests (check journal-specific guidelines for which heading to use)
\item  Ethics approval
\item  Consent to participate
\item  Consent for publication
\item  Availability of data and materials
\item  Code availability
\item Authors' contributions
\end{itemize}
\bibliography{bibliography}% common bib file
%%% if required, the content of .bbl file can be included here once bbl is generated
%%%\input sn-article.bbl

\end{document}